\newtheorem{theorem}{Theorem}[section]
\newtheorem*{theorem*}{Theorem}
\newtheorem*{remark*}{Remark}
\newtheorem{lemma}{Lemma}[section]
\newtheorem{corollary}[theorem]{Corollary}
\newtheorem{proposition}{Proposition}[section]
\newtheorem{remark}[theorem]{Remark}
\numberwithin{equation}{section}
\begin{document}
\title{Parabolic frequency monotonicity on compact manifolds}
\author{Xiaolong Li}
\address{Department of Mathematics, University of California, Irvine, Irvine, CA 92697, USA}
\email{xiaolol1@uci.edu}
%
%
\author{Kui Wang}
%
%
\address{School of Mathematical Sciences, Soochow University, Suzhou, 215006, China}
\email{kuiwang@suda.edu.cn}
\maketitle
\begin{abstract}
This work is devoted to the study of parabolic frequency for solutions of the heat equation on Riemannian manifolds.
We show that the parabolic frequency functional is almost increasing on compact manifolds with nonnegative sectional curvature, which generalizes a monotonicity result proved by C. Poon \cite{Po96} and by L. Ni \cite{Ni15}. The proof is based on
a generalization of R. Hamilton's matrix Harnack inequality \cite{Ham1} for small time. As applications, we obtain a unique continuation result.   Monotonicity of a new quantity under two-dimensional Ricci flow, closely related to the parabolic frequency functional, is derived as well.
\end{abstract}

\section{Introduction}
The (elliptic) frequency functional for a harmonic function $h(x)$ on $\mathbb{R}^n$, introduced by F. J. Almgren \cite{AL79} in 1979 and used in the study of local regularity of (multiple-valued) harmonic functions and minimal surfaces, is defined by
$$
I_e ( r)=\frac{r \int_{B(o,r)} |\nabla h|^2\ d\mu}{\int_{\partial B(o,r)}  h^2\ dA},
$$
where $dA$ is the induced $n-1$ dimensional Hausdorff measure on $\partial B(o,r)$ and $o$ is a fixed point in $\mathbb{R}^n$. Almgren obeserved that $I_e(r)$ is monotone nondecreasing in $r$. For $n=2$, it was in fact first proved by G. H. Hardy using a complex analysis argument (see Exercise 6 on page 138 of \cite{Conway78}).
The monotonicity of $I_e (r)$ has many applications in partial differential equations and geometric measure theory. For instance, it was used by N. Garofalo and F.H. Lin  \cites{GaLin86, GaLin87} and F.H. Lin \cite{Lin91} to study the unique continuation properties for elliptic operators and to estimate the size of nodal sets of solutions to parabolic and ellptic equations. The frequency functional $I_e (r)$ also controls the vanishing order of harmonic functions at the center $o$, see the book \cite{HL}. We refer the readers to \cite{HL} and \cite{Ste08} for more applications.

For harmonic functions on Riemannian manifolds, N. Garofalo and F.H. Lin \cite{GaLin86} proved that $I_e (r)$ is almost increasing in the sense that
\emph{there exist constants $R$ and $\Lambda$, depending only on the Riemannian metric, such that $e^{\Lambda r} I_e(r)$ is monotone nondecreasing in $(0, R)$}, (see also \cite[Theorem 2.2]{Dan13}). More recently,  A. Logunov \cite{Log}\cite{Log16} used this almost monotonicity together with combinatorics techniques to estimate the size of nodal sets for harmonic functions and eigenfunctions on manifolds, and proved
Nadirashvili's conjecture, the lower bound in Yau's conjecture, and polynomial upper estimates of the Hausdorff measure of nodal sets of Laplace eigenfuctions.

The parabolic frequency functional for solutions of heat equation on $\mathbb{R}^n$  was introduced by C. Poon \cite{Po96} in 1996 and used in the study of the unique continuation of solutions to parabolic equations. We recall its definition for solutions of heat equation on Riemannian manifolds. Let $(M^m, g)$ be a complete Riemannian manifold, $o$ be a fixed point in $M$, and $d \mu$ be the volume element with respect to the Riemannian metric $g$.
Let $u(x,t)$ be a smooth nonconstant solution to the heat equation
\begin{equation}\label{heatequation}
u_t-\triangle_g u=0
\end{equation}
in $M\times [0,T]$.
Let  $H(x,o; t)$ be the fundamental solution to the heat equation (\ref{heatequation}), written as $H(x,t)$ for short. Assume either $M$ is compact or $M$ is complete with bounded geometry and $u(x,t)$ satisfies certain growth conditions so that the integrals are finite and all integration by parts can be justified. Then the parabolic frequency for $u$ is defined as
$$
I(t)=t\cdot \frac{\int_M H(x,t)\cdot |\nabla u|^2(x, T-t)\ d\mu}{\int_M H(x,t)\cdot u^2(x, T-t)\ d\mu}.
$$
It was shown by C. Poon \cite{Po96} and L. Ni \cite{Ni15} that if $M$ has nonnegative sectional curvature and parallel Ricci curvature, then $I(t)$ is monotone nondecreasing in $t$. The main ingredient of their proofs is the matrix Harnack estimate of R. Hamilton \cite{Ham1}, which asserts that on a Riemannian manifold with nonnegative sectional curvature and parallel Ricci curvature, the fundamental solution $H(x,t)$ satisfies
$$\nabla_i \nabla_j H -\frac{\nabla_i H \nabla_j H }{H} +\frac{H}{2t}g_{ij} \geq 0.$$
In fact, R. Hamilton proved the above matrix Harnack estimate for any positive solution of the heat equation. For K\"{a}hler manifolds with nonnegative bisectional curvature, L. Ni \cite{Ni15} also proved the monotonicity of $I(t)$ when $u$ is a holomorphic function. The proof again relies on a matrix Li-Yau-Hamilton estimate for solutions to the heat equation on K\"ahler manifolds that was established in \cites{CN05}\cites{Ni07}.

The parallel Ricci curvature assumption seems quite restrictive and it is our purpose of this paper to study parabolic frequency functional for solutions of heat equation on more general Riemannian manifolds. In particular, we prove the almost monotonicity of the parabolic frequency functional for a short time on compact manifolds with nonnegative sectional curvature. As applications, we obtain a unique continuation result (see corollary \ref{Co} below) for solutions of the heat equation on such manifolds. The main result of this paper is: 
\begin{theorem}
Let $(M^m,g)$ be a compact Riemannian manifold with nonnegative sectional curvature.
Assume $u(x,t)$ is a nonconstant solution to the heat equation (\ref{heatequation}) with the initial data $u_0(x)$.
There exists a constant $T>0$, depending on the manifold $M$ and $u_0(x)$,  such that
$$
e^{t^{1/2}}\cdot t\cdot\frac{\int_M H(x,t)\cdot |\nabla u|^2(x, T-t)\ d\mu}{\int_M H(x,t)\cdot u^2(x, T-t)\ d\mu}
$$
is monotone increasing in $t$ for $[0,T]$.
\end{theorem}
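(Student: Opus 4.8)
The plan is to follow the classical scheme of Poon and Ni, computing the logarithmic derivative of the frequency functional $I(t)$ and showing it is bounded below by a controlled error term rather than exactly nonnegative. Set $\tau=T-t$, write $\rho(x,t)=H(x,t)$, and introduce the two integrals $D(t)=\int_M H\,|\nabla u|^2(\cdot,T-t)\,d\mu$ and $N(t)=\int_M H\,u^2(\cdot,T-t)\,d\mu$, so that $I(t)=tD(t)/N(t)$. First I would differentiate $N$ and $D$ in $t$, using both $H_t=\Delta H$ and $u_t=\Delta u$ (with a sign flip because of the backward time substitution), and integrate by parts to express $N'$, $D'$ in terms of weighted Dirichlet-type integrals. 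The key algebraic identity, exactly as in Poon's computation on $\mathbb{R}^n$, should produce
\begin{equation*}
\frac{d}{dt}\log I(t)=\frac{1}{t}+\frac{D'(t)}{D(t)}-\frac{N'(t)}{N(t)},
\end{equation*}
and after the integrations by parts the right-hand side becomes (twice) a ratio of the form $\langle \text{error}\rangle/D(t)$ plus a Cauchy--Schwarz term that is manifestly $\le 0$; the error term is precisely where the Hessian of $\log H$ enters.

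The second step is to insert the generalized matrix Harnack inequality for small time, stated earlier in the excerpt (the generalization of Hamilton's estimate valid on compact manifolds with nonnegative sectional curvature without the parallel Ricci assumption). On $\mathbb{R}^n$ one has the exact identity $\nabla_i\nabla_j\log H=-\frac{1}{2t}g_{ij}$; on a general compact manifold of nonnegative sectional curvature the small-time Harnack estimate gives $\nabla_i\nabla_j H-\frac{\nabla_iH\nabla_jH}{H}+\frac{H}{2t}g_{ij}\ge -C\,H\,g_{ij}$ (or a comparable bound with a constant $C$ depending on curvature and its derivatives) for $t\le T_0$. Substituting this into the error term from Step 1 replaces the ``$=0$'' that occurs in the Euclidean case by an inequality $\ge -C'\,D(t)$ up to lower-order contributions controlled by $I(t)$ itself; here one also needs that $\int_M H\,|\nabla u|^2 \le$ comparable quantities, and that the first eigenvalue / heat-kernel lower bounds keep $N(t)$ away from zero on $[0,T]$, which is where the dependence of $T$ on $u_0$ enters (one must prevent $u(\cdot,T-t)$ from degenerating). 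Collecting terms yields $\frac{d}{dt}\log I(t)\ge -C'' $ for some constant, but to obtain the sharper factor $e^{t^{1/2}}$ in the statement rather than $e^{Ct}$, I would track the $t$-dependence of the Harnack error more carefully: the small-time estimate degrades like $t^{-1/2}$ near $t=0$, so the error in $\frac{d}{dt}\log I$ is $O(t^{-1/2})$, whose antiderivative is $O(t^{1/2})$, giving exactly $\frac{d}{dt}\log\big(e^{t^{1/2}}I(t)\big)=\frac{1}{2}t^{-1/2}+\frac{d}{dt}\log I(t)\ge 0$.

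Concretely the steps are: (i) the Poon-type differentiation identity for $\log I$, reducing monotonicity to a single inequality involving $\int_M H\big(\nabla_i\nabla_j\log H\big)(\text{quadratic in }u,\nabla u)$; (ii) apply the generalized matrix Harnack estimate for small time to bound that Hessian term from below by $-\tfrac{1}{2}t^{-1/2}H g_{ij}$ up to constants; (iii) absorb the resulting error using Cauchy--Schwarz and the definition of $I$ so that $\frac{d}{dt}\log I(t)\ge -\tfrac12 t^{-1/2}$ on $[0,T]$; (iv) choose $T$ small enough (depending on $M$ through the curvature bounds in the Harnack estimate, and on $u_0$ through a lower bound for $N(t)$ and an upper bound for $I(t)$ on $[0,T]$) so that all error terms are genuinely absorbed; (v) integrate to conclude that $e^{t^{1/2}}I(t)$ is nondecreasing.

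The main obstacle is Step (ii)--(iii): unlike the Euclidean case where the log-Hessian of the heat kernel is an exact multiple of the metric, here the Harnack estimate is only an inequality and only in one direction, so the Cauchy--Schwarz step that in Poon's proof exactly cancels the cross terms must instead be carried out so as to leave a residual of the right sign and the right order in $t$. Making the error term come out as precisely $O(t^{-1/2})$ — no worse — requires the small-time version of Hamilton's matrix Harnack inequality with an explicit $t^{-1/2}$-type error, which is the technical heart of the paper; if that estimate only gave an $O(1)$ or $O(t^{-1})$ error the exponential correction factor would have to be $e^{Ct}$ or would fail entirely. A secondary, more routine difficulty is justifying all the integrations by parts and the finiteness and positivity of $N(t)$, $D(t)$ on $[0,T]$, which on a compact manifold is standard but does force $T$ to depend on $u_0$ (through, e.g., a positive lower bound for $\int_M H\,u^2$, equivalently nondegeneracy of the solution near time $T$).
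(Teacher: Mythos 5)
Your overall architecture agrees with the paper's: differentiate $D(t)/Z(t)$, use the small-time matrix Harnack inequality for $H$, complete the square so Cauchy--Schwarz kills the good term against $2D^2/Z$, and absorb an $O(t^{-1/2})$ error to get $\bigl(e^{t^{1/2}}\,t\,D/Z\bigr)'\ge 0$. But there is a genuine gap in your steps (ii)--(iii). The small-time Harnack estimate proved in the paper does \emph{not} give a pointwise error of order $t^{-1/2}$; it gives
$$\nabla^2\log H+\tfrac{1}{2t}g\ \ge\ -\epsilon\Bigl(C_0+\tfrac{d^2(x)}{4t}\Bigr)g,$$
whose error is of order $t^{-1}$ at any fixed $x\neq o$. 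The $t^{-1/2}$ rate you need only appears \emph{after integration}, i.e.\ one must prove that
$$\frac{1}{4t}\int_M H\,|\nabla u|^2\,d^2(x)\,d\mu\ \le\ C\,t^{-1/2}\,D(t)$$
for small $t$. This is not a soft consequence of heat-kernel concentration: splitting the integral at $d^2(x)=t^{1/2}$, the near region is fine, but the far region is only bounded in absolute terms by $C e^{-c t^{-1/2}}$ (via the Gaussian upper bound for $H$), and to absorb that into $D(t)$ you must rule out that $D(t)$ itself decays faster than $e^{-ct^{-1/2}}$ as $t\to 0$.

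That a priori lower bound, $D(t)\ge c\,e^{-C t^{-1/4}}$, is the technical heart of the paper's proof and is absent from your proposal. It is obtained by running a \emph{second} frequency-type monotonicity argument one derivative up: setting $X=\nabla u$ (which solves $X_t-\Delta X=-\operatorname{Ric}(X)$) and $W(t)=\int_M H|\nabla X|^2$, one shows $W'D-WD'\ge -\frac{C}{\epsilon t}D^2-\frac{1+C\epsilon}{t}WD$, hence $W/D\le C\,t^{-1-C\epsilon}$, hence $(\log D)'\le C\,t^{-1-C\epsilon}$, which integrates to the stated lower bound (with $\epsilon$ chosen so $C_M\epsilon=1/4$). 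This is also exactly where the hypothesis on $u_0$ (the constant $a_0$ controlling $|u_0|_{C^2}$ relative to $\|\nabla u_0\|_{L^2}$) enters, and why $T$ depends on $u_0$ --- not, as you suggest, through a lower bound on $\int_M Hu^2$ (that quantity is automatically nondecreasing since $Z'=2D>0$). Without this lemma your step (iii) does not close: the inequality $\frac{d}{dt}\log(D/Z)\ge -\frac1t-\frac12 t^{-1/2}$ cannot be derived from the Harnack estimate alone.
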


It is also natural to consider the case when the metric $g$ is evolving by a geometric flow. In this direction, we consider $(M^2, g(t) )$, a solution to Ricci flow on surfaces  with positive scalar curvature, and define a quantity $J(t)$ by
\begin{eqnarray}\label{J(t)}
  J(t)= t\cdot \frac{\int_M |\nabla v(x,t)|^2 \cdot R(x,t)\;d\mu_{g(t)}}{\int_M v^2(x,t)\cdot R(x,t)\;d\mu_{g(t)}},
\end{eqnarray}
where $v(x,t)$ is a solution of the backward heat equation and $R(x,t)$ is the scalar curvature.  We prove that $J(t)$ is monotone increasing in $t$.
\begin{theorem}\label{Monotonicity Theorem}
Let $M^2$ be a closed surface. Suppose that $g(t)$ is a solution to the Ricci flow $\frac{\partial}{\partial t} g =-R g$ on $M$ with positive scalar curvature for $t\in [0,T)$. Let $v(x, t)$ be a nonconstant solution to the backward heat equation
$v_t(x, t)+\Delta_{g(t)} v(x, t)=0$
on $M\times[0,T)$.
Then $J(t)$ defined in \eqref{J(t)} is monotone increasing in $t$ on $[0,T)$.
\end{theorem}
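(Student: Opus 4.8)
The plan is to run the standard frequency‑functional argument, with the scalar curvature $R$ in the role played by the heat kernel in the static case and Hamilton's Harnack inequality for the Ricci flow on surfaces in the role of the matrix Harnack estimate. Set $I(t)=\int_M v^2 R\,d\mu_{g(t)}$ and $D(t)=\int_M|\nabla v|^2 R\,d\mu_{g(t)}$, so that $J(t)=tD(t)/I(t)$; since $R>0$ and $v$ is nonconstant, $I(t),D(t)>0$, and because $\frac{J'}{J}=\frac1t+\frac{D'}{D}-\frac{I'}{I}$ it suffices to prove $\frac{D'}{D}-\frac{I'}{I}\ge-\frac1t$. The first step is to differentiate $I$ and $D$, using the surface Ricci flow identities $\partial_t R=\Delta R+R^2$, $\partial_t d\mu_{g(t)}=-R\,d\mu_{g(t)}$, $\partial_t g^{ij}=Rg^{ij}$, the backward heat equation $v_t=-\Delta v$, the Bochner formula, the two‑dimensional identity $\Ric=\tfrac12 Rg$, and integration by parts on the closed surface $M$; a direct computation should give
\begin{equation*}
I'(t)=2D(t),\qquad D'(t)=2\int_M R\,|\nabla^2 v|^2\,d\mu_{g(t)}+2\int_M R^2\,|\nabla v|^2\,d\mu_{g(t)},
\end{equation*}
so that the target inequality becomes $\frac{D'(t)}{D(t)}\ge\frac{2D(t)}{I(t)}-\frac1t$.

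Next I would introduce the $R$‑weighted Laplacian $L=\Delta+\langle\nabla\log R,\nabla\,\cdot\,\rangle$, which is self‑adjoint with respect to the inner product $\langle f,h\rangle:=\int_M fhR\,d\mu_{g(t)}$ and satisfies $\langle -Lf,f\rangle=\int_M|\nabla f|^2 R\,d\mu_{g(t)}$, so that $D(t)=\langle -Lv,v\rangle$. Integrating the Bakry--\'Emery Bochner identity
\begin{equation*}
\tfrac12 L|\nabla v|^2=|\nabla^2 v|^2+\langle\nabla v,\nabla(Lv)\rangle+\bigl(\Ric-\nabla^2\log R\bigr)(\nabla v,\nabla v)
\end{equation*}
against $R\,d\mu_{g(t)}$, and using $\int_M R\,L|\nabla v|^2\,d\mu_{g(t)}=0$ together with $\int_M R\,\langle\nabla v,\nabla(Lv)\rangle\,d\mu_{g(t)}=-\int_M R\,(Lv)^2\,d\mu_{g(t)}$, yields
\begin{equation*}
\int_M R\,(Lv)^2\,d\mu_{g(t)}=\int_M R\,|\nabla^2 v|^2\,d\mu_{g(t)}+\int_M R\,\Ric(\nabla v,\nabla v)\,d\mu_{g(t)}-\int_M R\,\nabla^2\log R(\nabla v,\nabla v)\,d\mu_{g(t)}.
\end{equation*}

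Now comes the decisive step. Hamilton's matrix Harnack estimate for the Ricci flow on surfaces with $R>0$ gives a lower Hessian bound of the form $\nabla_i\nabla_j\log R+\tfrac1{2t}g_{ij}\ge0$ (any additional $\tfrac12 Rg_{ij}$ correction term is harmless). Substituting this, together with $\Ric(\nabla v,\nabla v)=\tfrac12 R|\nabla v|^2$, into the identity above and comparing with the formula for $D'(t)$ gives
\begin{equation*}
\int_M R\,(Lv)^2\,d\mu_{g(t)}\le\int_M R\,|\nabla^2 v|^2\,d\mu_{g(t)}+\int_M R^2\,|\nabla v|^2\,d\mu_{g(t)}+\frac{D(t)}{2t}=\tfrac12 D'(t)+\frac{D(t)}{2t},
\end{equation*}
the point being that the $\tfrac12 Rg_{ij}$ correction (coefficient $\le\tfrac12$) is exactly absorbed by the curvature term $2\int_M R^2|\nabla v|^2$ that the flow produces in $D'(t)$. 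Finally Cauchy--Schwarz for $\langle\cdot,\cdot\rangle$ gives $D(t)^2=\langle -Lv,v\rangle^2\le\langle v,v\rangle\,\langle Lv,Lv\rangle=I(t)\int_M R(Lv)^2\,d\mu_{g(t)}\le I(t)\bigl(\tfrac12 D'(t)+\tfrac{D(t)}{2t}\bigr)$, which rearranges to exactly $\frac{D'(t)}{D(t)}\ge\frac{2D(t)}{I(t)}-\frac1t$; hence $J'(t)\ge0$ on $(0,T)$, and since $J$ extends continuously with $J(0)=0$, it is monotone nondecreasing on $[0,T)$.

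The main obstacle I anticipate is this decisive step: one must invoke the correct form of Hamilton's differential Harnack estimate for the \emph{unnormalized} surface flow $\frac{\partial}{\partial t}g=-Rg$ with $R>0$, in its full matrix (Hessian) form rather than merely the trace form, and check that the coefficient of any $Rg_{ij}$ correction term does not exceed $\tfrac12$, so that it is absorbed by the extra curvature term the Ricci flow generates in $D'(t)$. The remaining ingredients --- the evolution computations, the Bakry--\'Emery Bochner identity, and the integrations by parts --- are routine and fully justified since $M$ is closed and all quantities are smooth on $M\times[0,T)$.
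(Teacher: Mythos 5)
Your proof is correct and is essentially the paper's argument in the language of the $R$-weighted (Bakry--\'Emery) Laplacian: your quantity $\int_M R\,(Lv)^2\,d\mu$ is exactly the paper's $\int_M R\,|\langle\nabla v,\nabla\log R\rangle - v_t|^2\,d\mu$ (since $Lv=\Delta v+\langle\nabla\log R,\nabla v\rangle=-v_t+\langle\nabla\log R,\nabla v\rangle$), and both arguments hinge on the same two ingredients, namely Hamilton's surface Harnack estimate $\nabla_i\nabla_j\log R+\tfrac12\bigl(R+\tfrac1t\bigr)g_{ij}\ge0$ to control the Hessian term and the Cauchy--Schwarz inequality applied to $D(t)=\int_M vR\bigl(v_t-\langle\nabla v,\nabla\log R\rangle\bigr)\,d\mu$. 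Your bookkeeping of the $\tfrac12Rg_{ij}$ correction (absorbed by the $2\int_M R^2|\nabla v|^2\,d\mu$ term in $D'$) matches the cancellation the paper performs, so the two proofs coincide in substance.
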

It would be desirable to find applications of this monotonicity formula and to extend this result to higher dimensions and to other geometric flows.


\section{Hamilton's matrix Harnack inequality for small time}
In this section, we present an improved version of Hamilton's matrix Harnack inequality  \cite{Ham1} for small time, which will be used in the proof of monotonicity of parabolic frequency on compact manifolds. We prove the following theorem.
\begin{theorem}\label{thm-harnack}
Let ($M^m, g$) be a compact Riemannian manifold with $\operatorname{Sect}_g\ge -K$, $K\ge0$ and $|\nabla \operatorname{Ric}|\le L$, and
$f(x,t)$ be a positive solution to the heat equation (\ref{heatequation}). Then for any $\epsilon>0$,
there exist constants $B=B(M)$ and $T=T(\epsilon, K, L, m)$ such that
$$
\nabla^2 \log f(x,t)+\frac{1}{2t}g\ge -\Big((\frac{34}{3}+\epsilon)K+\epsilon\Big)\Big(m+\log\frac{B}{t^{m/2}f
}\Big) g
$$
for $t\in(0,T]$.
\end{theorem}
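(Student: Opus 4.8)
The plan is to run Hamilton's original tensor maximum principle argument, but to keep track of the "error" term that is usually thrown away. Recall the setup: for a positive solution $f$ of the heat equation on a compact manifold, set $u = \log f$, so that $u$ satisfies $u_t = \Delta u + |\nabla u|^2$. Consider the symmetric $2$-tensor
\[
P_{ij} = \nabla_i\nabla_j u + \frac{1}{2t} g_{ij},
\]
and the goal is a lower bound $P_{ij} \ge -\Phi\, g_{ij}$ where $\Phi = \Phi(x,t)$ is the stated curvature-times-log quantity. First I would compute the evolution equation for $P_{ij}$ under the heat operator coupled with the drift $2\nabla u \cdot \nabla$; this is the standard Bochner-type computation behind \cite{Ham1}. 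One finds schematically
\[
\left(\frac{\partial}{\partial t} - \Delta - 2\nabla u\cdot\nabla\right) P_{ij}
= 2 P_{ik}P_{kj} - \frac{1}{t}P_{ij} + (\text{curvature terms}),
\]
where the curvature terms are precisely what vanish when $\mathrm{Sect}\ge 0$ and $\mathrm{Ric}$ is parallel, and which in our generality are controlled by $K$ (for the $\mathrm{Rm}*\mathrm{Hess}\,u$ and $\mathrm{Ric}*\mathrm{Hess}\,u$ pieces) and by $L = \sup|\nabla\mathrm{Ric}|$ (for the $\nabla\mathrm{Ric}*\nabla u$ pieces). The quadratic term $2P_{ik}P_{kj}$ and the good term $-\frac1t P_{ij}$ are exactly the ingredients that give Hamilton's sharp constant in the model case.

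Next I would feed these error terms into a tensor maximum principle (Hamilton's ODE-to-PDE comparison for symmetric tensors). The subtlety is that the error terms involve $|\nabla u|$ and the full Hessian, not just the part bounded below, so they must first be bounded by universal quantities. Here I would invoke the standard Li–Yau / Hamilton gradient and Hessian estimates for the heat equation on compact manifolds: for $t$ in a fixed small interval one has $t|\nabla u|^2 \lesssim m + \log\frac{B}{t^{m/2}f}$ and a matching bound on $t|\nabla^2 u|$, with $B = B(M)$ absorbing the geometry (this is where the constant $B(M)$ and the "$m + \log(B/t^{m/2}f)$" shape enter — it is the usual heat-kernel-type upper bound $f \le B t^{-m/2}$ rephrased). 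Using these, the curvature error terms are dominated by $\big(cK + (\text{lower order in }t)\big)\Phi\, g_{ij}$ with $c$ a dimensional constant; the precise accounting of the numerology — Hamilton's evolution has explicit coefficients, and tracking them through the $2P^2$ and $-P/t$ terms — is what produces the constant $\frac{34}{3}$, while $\epsilon$ absorbs both the slack in $c$ and the $L$-dependent and lower-order-in-$t$ pieces, at the cost of shrinking $T = T(\epsilon, K, L, m)$.

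Finally I would set up the comparison ODE: let $\phi(t)$ solve the scalar ODE obtained by restricting the tensor evolution to its most negative eigenvalue direction, namely (roughly) $\phi' = 2\phi^2 - \frac1t\phi - \big((\tfrac{34}{3}+\epsilon)K + \epsilon\big)(m + \log\frac{B}{t^{m/2}f})$-type bound, with the initial condition dictated by $P_{ij} \ge 0$ as $t\to 0^+$ (which holds because $\nabla^2 u$ stays bounded while $\frac{1}{2t}g_{ij}\to+\infty$). The tensor maximum principle then yields $P_{ij}\ge -\Phi g_{ij}$ provided $\Phi$ is a supersolution of the relevant scalar inequality on $(0,T]$; one checks that the stated $\Phi = \big((\tfrac{34}{3}+\epsilon)K+\epsilon\big)\big(m+\log\frac{B}{t^{m/2}f}\big)$ works once $T$ is small enough that the $-\frac1t\Phi$ term and the $2\Phi^2$ term are negligible compared to the driving term. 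I expect the main obstacle to be precisely this last bookkeeping: Hamilton's evolution term $2P_{ik}P_{kj}$ is only $\ge 0$ as a quadratic form when contracted against itself, so controlling the null-eigenvector direction under the maximum principle (rather than against $-\Phi g$) requires care, and extracting the sharp-ish constant $\frac{34}{3}$ rather than a crude one means the curvature commutator terms must be grouped exactly as in \cite{Ham1} before estimating — a soft bound would give a worse numerator.
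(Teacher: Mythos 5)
Your plan follows the paper's proof essentially verbatim: the paper also runs Hamilton's evolution-equation computation (phrased for $N_{ij}=f\big(\nabla^2\log f+(\tfrac{1}{2t}+CA)g\big)$ rather than for $P_{ij}$ itself, a cosmetic difference), controls the curvature error terms via $\operatorname{Sect}_g\ge -K$, $|\nabla\operatorname{Ric}|\le L$, and Hamilton's gradient/Laplacian estimates (the source of $B(M)$ and of the factor $A=m+\log\frac{B}{t^{m/2}f}$), and closes with Hamilton's tensor maximum principle after choosing $C=(\tfrac{34}{3}+\epsilon)K+\epsilon$ and shrinking $T$. The bookkeeping you defer does close exactly as you anticipate: $\tfrac32\cdot\tfrac{34}{3}=17=16+1$, where the $16$ comes from the Laplacian estimate, and $\epsilon$ absorbs both the $3L|\nabla f|$ term (via Cauchy--Schwarz against $\epsilon|\nabla f|^2/f$, using $t\lesssim \epsilon^2 m/L^2$) and the vanishing of $1-\frac{(m-1)Kt/2}{1-e^{-(m-1)Kt/2}}$ as $t\to 0$.
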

To begin with,  we collect some well-known estimates on positive solutions to the heat equation,  which will be
used in the proof of Theorem \ref{thm-harnack}.
\begin{lemma}[Corollary 1.2, 1.3 and 4.2 in \cite{Ham1}]
Let $(M^m, g)$ be a compact Riemannian manifold with $\operatorname{Ric}_g\ge -(m-1)K$.
Suppose that $f(x,t)$ is a positive solution the heat equation (\ref{heatequation}), satisfying
$\int_M f\ d\mu\le 1$. There exists a constant $B=B(M)$ depending only on $M$ such that
\begin{eqnarray}
f(x,t)&\le& \frac{B}{t^{m/2}}\int_M f(x,t)\ d\mu,\\
t|\nabla f|^2&\le& \Big(2+2(m-1)Kt\Big)f^2 \log \frac{B}{t^{m/2}f},\label{grad}\\
\frac t 2 \triangle f&\le& \frac{4(m-1)Kt/2}{1-e^{-(m-1)Kt/2}}  \Big(m+\log \frac{B}{t^{m/2}f}\Big) f, \label{laplace}
\end{eqnarray}
for $0<t\le1$.
\end{lemma}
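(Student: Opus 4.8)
These three inequalities are Li--Yau--Hamilton type a priori estimates, and the plan is to prove each of them by the parabolic maximum principle, following \cite{Ham1}. Two reductions first. On a closed manifold $\int_M f(\cdot,t)\,d\mu$ is independent of $t$ (integrate the heat equation), so the hypothesis $\int_M f\,d\mu\le 1$ persists in time and, after rescaling $f$, one may assume $\int_M f\,d\mu=1$; moreover $f$ may be taken smooth and bounded on $M\times[0,1]$, as the general case (e.g.\ the fundamental solution) follows by applying the estimates on $M\times[\varepsilon,1]$ with $f(\cdot,\varepsilon)$ as initial data and letting $\varepsilon\to0$. For the supremum bound it then suffices to produce $B=B(M)$ with $f(x,t)\le Bt^{-m/2}$ for $0<t\le1$: writing $f(x,t)=\int_M H(x,y,t)f(y,0)\,d\mu(y)$ and invoking the standard on-diagonal heat kernel bound $H(x,y,t)\le Bt^{-m/2}$ for $0<t\le1$ --- valid on any closed manifold with $\operatorname{Ric}_g\ge-(m-1)K$, either from the Li--Yau gradient estimate together with the (compactness based) volume lower bound $\inf_x\operatorname{Vol}B(x,\sqrt t)\ge c\,t^{m/2}$, or from Nash--Moser iteration via the Sobolev inequality --- gives it. Enlarging $B$ only weakens this bound, so we fix one $B=B(M)$ large enough that $\phi:=\log\tfrac{B}{t^{m/2}f}\ge m$ on $M\times(0,1]$; this lower bound on $\phi$ is the only way the hypothesis $\int_M f\,d\mu\le1$ enters the remaining two estimates.

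For the gradient estimate I would apply the maximum principle to
\[
P:=t|\nabla f|^2-\bigl(2+2(m-1)Kt\bigr)f^2\phi .
\]
The ingredients are Bochner's formula $(\partial_t-\Delta)|\nabla f|^2=-2|\nabla^2 f|^2-2\operatorname{Ric}(\nabla f,\nabla f)$, the hypothesis $\operatorname{Ric}_g\ge-(m-1)K$, the Cauchy--Schwarz inequality $|\nabla^2 f|^2\ge|\nabla|\nabla f|^2|^2/(4|\nabla f|^2)$, and the identities $(\partial_t-\Delta)f^2=-2|\nabla f|^2$ and $(\partial_t-\Delta)\phi=-\tfrac{m}{2t}-|\nabla f|^2/f^2$. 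A direct computation then shows $(\partial_t-\Delta)P<0$ at any interior point where $P=0$, $\nabla P=0$ and $\Delta P\le0$; the coefficient $2+2(m-1)Kt$ is precisely what is needed for the zeroth order term and the Ricci error term to be absorbed simultaneously, and the bound $\phi\ge m$ is used exactly here. Since $f$ is bounded, $t^{m/2}f\to0$ and hence $\phi\to+\infty$ as $t\to0^+$, so $P<0$ near $t=0$; the maximum principle therefore forces $P\le0$ on $(0,1]$.

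The Laplacian estimate is obtained in the same way, applied to
\[
Q:=\tfrac{t}{2}\Delta f-\frac{2(m-1)Kt}{1-e^{-(m-1)Kt/2}}\,(m+\phi)f .
\]
Here one uses that $\Delta f=\partial_t f$ again solves the heat equation, so $(\partial_t-\Delta)(t\Delta f)=\Delta f$, together with $(\partial_t-\Delta)\bigl((m+\phi)f\bigr)=-\tfrac{mf}{2t}+|\nabla f|^2/f$ and $(\partial_t-\Delta)f=0$. The weight $D(t):=\tfrac{2(m-1)Kt}{1-e^{-(m-1)Kt/2}}$, with $D(0^+)=4$, is exactly the one making the error terms in $(\partial_t-\Delta)Q$ cancel: $F(t):=t/D(t)$ solves the linear ODE $F'=\tfrac14-\tfrac{(m-1)K}{2}F$ with $F(0)=0$, the normalization forced by the computation. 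Again $Q<0$ for $t$ near $0$, as the term $(m+\phi)f\to+\infty$ dominates $\tfrac t2\Delta f$, so the maximum principle gives $Q\le0$ on $(0,1]$.

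The expected main obstacle is the bookkeeping in these two maximum principle computations: the exact coefficients $2+2(m-1)Kt$ and $\tfrac{2(m-1)Kt}{1-e^{-(m-1)Kt/2}}$ come out only if the time-dependent weights are tuned so that every error term closes at once, and the lower bound on $\phi$ must be in hand before those weights are chosen --- without it the sign of $(\partial_t-\Delta)P$, respectively $(\partial_t-\Delta)Q$, at the critical point is not controlled. The rest is routine: on a closed manifold $f$ is smooth for $t>0$ by parabolic regularity, and the blow-up of $\phi$ as $t\to0^+$ provides the sign of the initial term, so no spatial cutoff is needed; compactness is also what yields the heat kernel bound used for the supremum estimate.
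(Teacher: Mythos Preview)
The paper does not supply its own proof of this lemma: it is stated with the attribution ``Corollary 1.2, 1.3 and 4.2 in \cite{Ham1}'' and used as a black box in the proof of Theorem~\ref{thm-harnack}. So there is nothing in the paper to compare your argument against; what you have written is an outline of Hamilton's original proofs, and in that sense it is exactly the approach the paper implicitly invokes.

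Your sketch is faithful to Hamilton's method: the $L^\infty$ bound via the heat-kernel upper bound, and then the gradient and Laplacian estimates by applying the maximum principle to a quantity of the form $t\,\text{(derivative term)} - C(t)\,f^2\phi$ (resp.\ $-C(t)(m+\phi)f$) with a time-dependent weight tuned so that the computation closes. The identification $F(t)=t/D(t)$ as the solution of $F'=\tfrac14-\tfrac{(m-1)K}{2}F$, $F(0)=0$, is correct and is indeed what singles out the coefficient $\dfrac{2(m-1)Kt}{1-e^{-(m-1)Kt/2}}$. One minor caution: for the Laplacian estimate the bookkeeping you flag is genuinely delicate --- at the first contact point one substitutes $\tfrac{t}{2}\Delta f=D(t)(m+\phi)f$ back into $(\partial_t-\Delta)Q$, and one must also feed in the already-proved gradient bound to control the $|\nabla f|^2/f$ term with the right sign; without that step the inequality does not close on its own. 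Hamilton's argument uses the estimates in this order for exactly that reason, so your plan should make this dependence explicit.
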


\begin{proof}[Proof of Theorem \ref{thm-harnack}]
The proof of Theorem \ref{thm-harnack} is essentially based on the computations from Hamilton's paper \cite{Ham1}.
Let
\begin{eqnarray*}
A&=&m+\log \frac{B}{t^{m/2}f},\\
P&=&\frac{f}{2t}+C A f,\\
H_{ij}&=&\nabla_i\nabla_j f-\frac{\nabla_i f \nabla_j f}{f},\\
N_{ij}&=&H_{ij}+Pg_{ij},\\
Q&=&-\frac{f}{2t^2}-\frac{m}{2t}C f+C \frac{|\nabla f|^2}{f},
\end{eqnarray*}
where $C$ is a constant to be specified later.
Denote by
$$
W_{ikjl}=R_{ikjl}+K(g_{ij}g_{kl}-g_{il}g_{jk}),
$$
then
$$
R_{ikjl}N_{kl}=W_{ikjl}N_{kl}-KNg_{ij}+KN_{ij},
$$
and $W_{ikjl}\ge 0$ by $\operatorname{Sect}_g\ge -K$, which implies
$$
R_{ikjl}\nabla_kf\nabla_lf\ge -K|\nabla f|^2 g_{ij}.
$$
Direct computations give
\begin{eqnarray*}
\frac{\partial}{\partial t} N_{ij}&=&\triangle N_{ij}+\frac 2 f N^2_{ij}-\frac 4 f P N_{ij}
+2R_{ijkl}N_{kl}-R_{ik}N_{jk}-R_{jk}N_{ik}\\
&&+\frac 2 f R_{ikjl} \nabla_k f\nabla_l f+\big(\nabla_l R_{ij}-\nabla_i R_{jl}-\nabla_j R_{il}\big)\nabla_l f+\big(\frac 2 f P^2+Q\big)g_{ij}.
\end{eqnarray*}
Since
$$
\Big(\nabla_l R_{ij}-\nabla_i R_{jl}-\nabla_j R_{il}\Big)\nabla_l f\ge -3L|\nabla f| g_{ij},
$$
we conclude
\begin{eqnarray*}
\frac{\partial}{\partial t} N_{ij}&\ge&\triangle N_{ij}+\frac 2 f N^2_{ij}-\frac 4 f P N_{ij}+2W_{ijkl}N_{kl}\\
&&+2KN_{ij}-R_{ik}N_{jk}-R_{jk}N_{ik}+Z g_{ij},
\end{eqnarray*}
where
$$
Z=\frac 2 f P^2+Q-2KN-\frac{2K}{f}|\nabla f|^2-3L|\nabla f|.
$$
Since $\frac{2}{f}P^2 =\frac{f}{2t^2}+\frac 2 t C A f+2C A^2 f $ and $A\ge m$,
we obtain
$$
\frac{2}{f}P^2+Q\ge\frac{3}{2t}C A f+C \frac{|\nabla f|^2}{f}+2C^2 A^2 f.
$$
Using the estimate (\ref{laplace}), we estimate
 \begin{eqnarray*}
2KN+2K\frac{|\nabla f|^2}{f}+3L|\nabla f|
&=&2K\triangle f+\frac{Km}{t}f+2KmC A f+3L |\nabla f|\\
&\le&\Big(\frac{16(m-1)Kt/2}{1-e^{-(m-1)Kt/2}}+1\Big)\frac{K}{t}A f+2KC A^2 f+3L |\nabla f|.
\end{eqnarray*}
Assembling these estimates, we have
 \begin{eqnarray*}
Z\ge \frac{3C}{2t}A f+C \frac{|\nabla f|^2}{f}+2C^2 A^2 f-\Big(16\frac{(m-1)Kt/2}{1-e^{-(m-1)Kt/2}}+1\Big)\frac{K A f}{t} -2KCA^2 f-3L |\nabla f|.
\end{eqnarray*}

Now we choose $C=(\frac{34}{3}+\epsilon)K+\epsilon$, and then
 \begin{eqnarray*}
Z&\ge& \frac{3\epsilon}{2t}A f+\Big(\frac{3\epsilon}{2}+16(1-\frac{(m-1)Kt/2}{1-e^{-(m-1)Kt/2}})\Big)\frac{K A f}{t}\\
&& +\Big((\frac{34}{3}+\epsilon)K+\epsilon\Big) \frac{|\nabla f|^2}{f}+2\Big((\frac{34}{3}+\epsilon)K+\epsilon\Big)^2 A^2 f\\
&&-2K\Big((\frac{34}{3}+\epsilon)K+\epsilon\Big)A^2 f-3L |\nabla f|\\
&\ge& \frac{3\epsilon}{2t}A f+\Big[\frac{3\epsilon}{2}+16(1-\frac{(m-1)Kt/2}{1-e^{-(m-1)Kt/2}})\Big]\frac{K A f}{t}-\frac{9}{4\epsilon}L^2 f,
\end{eqnarray*}
where we used
$$3L|\nabla f|\le \epsilon\frac{|\nabla f|^2}{f}+\frac{9}{4\epsilon}L^2 f.$$
Observing
$$
\lim_{t\rightarrow 0}(1-\frac{(m-1)Kt/2}{1-e^{-(m-1)Kt/2}})=0,
$$
then we conclude there exists a small $T$, depending on $K$, $m$ and $\epsilon$, such that
$$
\frac{3\epsilon}{2}+16(1-\frac{(m-1)Kt/2}{1-e^{-(m-1)Kt/2}})\ge 0,$$
for $t\le T$.
Furthermore if $t<2\epsilon^2m/(3L^2)$, we have
$$
\frac{3\epsilon}{2t}A f-\frac{9}{4\epsilon}L^2 f\ge 0.
$$
Thus we can choose a constant $T$, depending on $K$, $m$, $L$, and $\epsilon$, such that
 $$Z\ge 0.$$
Therefore, Hamilton's maximum principle for tensors implies
$$
N_{ij}\ge 0
$$
for $0<t\le T=T( K, m, L, \epsilon)$,
proving the theorem.
\end{proof}
Denote by $d(x)=d_g(x,o)$, the distance between $x$ and $o$.
For the  heat kernel $H(x,t)$, we have both upper bound and lower bound from Cheng-Li-Yau's paper \cite{CLY} and Hamilton's paper \cite{Ham1}. We summarize as the following lemma.
\begin{lemma}
Let $(M^m, g)$ be a compact Riemannian manifold. The fundamental solution $H(x,t)$ of heat equation on $M$ satisfies
\begin{equation}\label{upper}
H(x,t)\le \frac{C}{t^{m/2}}\exp{\Big(-\frac{d^2(x)}{5t}\Big)},
\end{equation}
and
\begin{equation}\label{lowerbound}
H(x,t)\ge \frac{C}{t^{m/2}}\exp{\Big(-\frac{d^2(x)}{4t}(1+2(m-1)Kt)-\frac m 2 e^{2(m-1)Kt}\Big)}
\end{equation}
for some constant $C$ depending  on $M$.
\end{lemma}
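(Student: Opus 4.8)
The plan is to deduce both inequalities from the classical Gaussian heat kernel bounds of Cheng--Li--Yau \cite{CLY} (for the upper bound) and of Hamilton \cite{Ham1} (for the lower bound), using the compactness of $M$ to replace the intrinsic volume factors occurring there by the explicit power $t^{-m/2}$. Both estimates are understood for $0<t\le1$ as in the preceding lemma: for $t$ bounded away from $0$ they are immediate, since $\operatorname{diam}(M)<\infty$ and $H(\cdot,\cdot,t)$ is comparable to a positive constant there (the upper bound being genuinely a small-time statement, as $H\to 1/\operatorname{Vol}(M)$ while the right side tends to $0$ as $t\to\infty$). Since $M$ is compact there is $K\ge0$ with $\operatorname{Ric}_g\ge-(m-1)K$, and there are $c_0>0$, $r_0>0$ with $\operatorname{Vol}(B(x,r))\ge c_0 r^m$ for all $x\in M$ and $0<r\le r_0$; in particular $\operatorname{Vol}(B(x,\sqrt t))\ge c_0\,t^{m/2}$ for $t\le r_0^2$.

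\emph{Upper bound.} Cheng--Li--Yau's estimate \cite{CLY} gives, on any complete manifold with $\operatorname{Ric}_g\ge-(m-1)K$ and every $\eps>0$,
$$H(x,y,t)\le \frac{C(\eps,m)\,e^{2\sqrt{K}\,d(x,y)+C(\eps,m)Kt}}{\sqrt{\operatorname{Vol}(B(x,\sqrt t))\,\operatorname{Vol}(B(y,\sqrt t))}}\,\exp\!\Big(-\frac{d^2(x,y)}{(4+\eps)t}\Big).$$
Putting $y=o$ and $\eps=1$: since $d(x,o)\le\operatorname{diam}(M)$ and $t\le1$, the prefactor $e^{2\sqrt K d+CKt}$ is bounded by a constant, and the noncollapsing bound turns $(\operatorname{Vol}(B(x,\sqrt t))\operatorname{Vol}(B(o,\sqrt t)))^{-1/2}$ into $Ct^{-m/2}$; this yields \eqref{upper}. (Alternatively, one can argue self-containedly: the Sobolev inequality on the compact manifold $M$ gives the on-diagonal bound $\sup_xH(x,x,t)\le Ct^{-m/2}$ by Nash--Moser iteration, and Davies' integrated maximum principle upgrades it to the off-diagonal Gaussian factor with exponent $1/(4+\eps)$, after which the loss is absorbed by $1/5<1/4$.)

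\emph{Lower bound.} First, the matching on-diagonal bound $H(x,x,t)\ge c\,t^{-m/2}$ for $t\le1$ follows from \eqref{upper}: writing $H(x,x,2t)=\int_M H(x,y,t)^2\,d\mu(y)$ and applying Cauchy--Schwarz on a ball $B(x,A\sqrt t)$, one gets $H(x,x,2t)\ge \operatorname{Vol}(B(x,A\sqrt t))^{-1}\big(\int_{B(x,A\sqrt t)}H(x,y,t)\,d\mu\big)^2$, and for $A=A(m,K)$ large enough the integral is $\ge1/2$ because the complementary mass $\int_{d(x,y)\ge A\sqrt t}H\,d\mu$ is $\le C\int_A^\infty s^{m-1}e^{-s^2/5}\,ds<1/2$ by \eqref{upper} and Bishop--Gromov. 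To push this off the diagonal one invokes Hamilton's Li--Yau--Hamilton (Harnack) inequality for the positive solution $H(\cdot,o;\cdot)$ from \cite{Ham1}: under $\operatorname{Ric}_g\ge-(m-1)K$, for $0<s<t$,
$$H(o,o,s)\le H(x,o,t)\Big(\frac ts\Big)^{m/2}\exp\!\Big(\frac{d^2(x)}{4(t-s)}\big(1+2(m-1)K(t-s)\big)+\frac m2 e^{2(m-1)K(t-s)}\Big).$$
Rearranging, inserting $H(o,o,s)\ge c\,s^{-m/2}$ (so that the factor $(s/t)^{m/2}$ exactly cancels $s^{-m/2}$), and letting $s\downarrow0$ gives precisely \eqref{lowerbound}: the two curvature corrections $1+2(m-1)Kt$ and $-\frac m2 e^{2(m-1)Kt}$ are exactly what Hamilton's Harnack inequality produces in the limit. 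Absorbing all remaining $t$-dependence of the constants into one $C=C(M)$ (legitimate for $0<t\le1$) completes the proof.

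\emph{Main obstacle.} The substance is classical, so the real care lies in keeping every constant dependent only on $M$: this is where compactness enters, through the lower Ricci bound, the finite diameter, and the noncollapsing $\operatorname{Vol}(B(x,\sqrt t))\ge c_0 t^{m/2}$ that produces the uniform power $t^{-m/2}$. The one genuinely nontrivial point is the passage from the sharp exponent $d^2/4t$ to $d^2/5t$ in \eqref{upper}: on a curved manifold the constant $4$ cannot be attained without an additive correction term, so the clean statement is bought precisely by relaxing $4$ to $5$ (any constant $>4$ works, with $C$ degenerating as it tends to $4$). The lower bound requires no new idea beyond bookkeeping the two curvature corrections through Hamilton's integration.
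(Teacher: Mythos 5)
The paper offers no argument for this lemma at all --- it is stated as a summary of bounds imported from Cheng--Li--Yau \cite{CLY} (upper bound) and Hamilton \cite{Ham1} (lower bound) --- so your proposal has to be judged as a reconstruction from those sources. Most of it is sound: for \eqref{upper}, compactness makes the volume and curvature prefactors in the Gaussian upper estimate uniform for $t\le 1$, $d\le\operatorname{diam}(M)$, and relaxing the exponent from $\frac1{(4+\varepsilon)t}$ to $\frac1{5t}$ absorbs what is left; and your on-diagonal lower bound $H(x,x,t)\ge c\,t^{-m/2}$, via the semigroup identity, Cauchy--Schwarz on $B(x,A\sqrt t)$, the mass estimate from \eqref{upper} and Bishop--Gromov, is correct and standard.

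The gap is in the off-diagonal step of \eqref{lowerbound}. You invoke, attributed to \cite{Ham1}, a two-time Harnack inequality with the \emph{sharp} power $(t/s)^{m/2}$ and additive corrections $\frac{d^2}{4(t-s)}\bigl(1+2(m-1)K(t-s)\bigr)+\frac m2 e^{2(m-1)K(t-s)}$, valid under only $\operatorname{Ric}\ge-(m-1)K$. I do not believe that statement is in Hamilton's paper: his sharp Harnack (power exactly $m/2$, exponent $d^2/4(t-s)$) requires nonnegative sectional curvature \emph{and} parallel Ricci, while what he proves under a mere lower Ricci bound are the integral/gradient estimates quoted before this lemma, e.g.\ \eqref{grad}, which integrate along a geodesic only to a fixed-time comparison with exponent of order $d^2/(2t)$, not $d^2/(4t)$. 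The classical Li--Yau Harnack under $\operatorname{Ric}\ge-(m-1)K$ carries the power $(t/s)^{m\alpha/2}$ with $\alpha>1$, and that extra power destroys precisely the cancellation you need when you let $s\downarrow 0$ against $H(o,o,s)\ge c\,s^{-m/2}$. So, given the on-diagonal bound, the inequality you cite is essentially equivalent to \eqref{lowerbound} itself and is unsupported by the cited source. To close the gap, either quote an estimate that genuinely has the sharp power $m/2$ with additive curvature corrections (such Li--Yau-type refinements exist in the later literature, e.g.\ Li--Xu), or argue via the Cheeger--Yau comparison theorem: $H(x,o,t)$ dominates the heat kernel of the model space of curvature $-K$, whose explicit lower bound, after absorbing the cross term $\tfrac{(m-1)\sqrt K\,d}{2}$ by Young's inequality with weight $2(m-1)Kt$, yields exactly the form \eqref{lowerbound} for $t\le 1$, with constants depending only on $M$.
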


From the lower bound (\ref{lowerbound}), we deduce
\begin{eqnarray*}
A&=&m+\log\frac{B}{t^{m/2}H}\\
&\le& m+\log B-\log(Ce^{-\frac{d^2}{4t}(1+2(m-1)K t)-\frac m 2 e^{2(m-1)Kt}})\\
&\le&C_0+\frac{d^2}{4t}+\frac 1 2(m-1)Kd^2(x).
\end{eqnarray*}
Where $C_0$ is a constant depending on $M$ only.

In conclusion,  we get the following Harnack inequality for the heat kernel on compact manifolds with nonnegative sectional curvature.
\begin{corollary}\label{cor2}
Assume $M$ is a compact manifold with nonnegative sectional curvature. Then for any
 $\epsilon>0$, there exist constants $T=T(M, \epsilon)$ and $C_0=C_0(M)$, such that
\begin{equation}\label{harnack}
\nabla^2 \log H(x,t)+\frac{1}{2t}g\ge-\epsilon\big(C_0+\frac{d^2(x)}{4t}\big)g,
\end{equation}
for $t\in (0,T]$.
\end{corollary}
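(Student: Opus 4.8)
The plan is to obtain \eqref{harnack} by specializing Theorem \ref{thm-harnack} to the fundamental solution $f = H(x,t)$ and then inserting the Gaussian lower bound \eqref{lowerbound}. Since $M$ is compact with nonnegative sectional curvature, the hypotheses of Theorem \ref{thm-harnack} hold with $K=0$: indeed $\operatorname{Sect}_g \ge 0$, and $|\nabla \operatorname{Ric}| \le L := \max_M |\nabla \operatorname{Ric}| < \infty$ because $M$ is compact, so $L = L(M)$; moreover $H > 0$.

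First I would apply Theorem \ref{thm-harnack} with $f = H$ and $K = 0$. The constant $C = (\tfrac{34}{3}+\epsilon)K + \epsilon$ then reduces to $\epsilon$, the threshold $T = T(\epsilon, K, L, m)$ becomes $T = T(\epsilon, 0, L, m) = T(M,\epsilon)$, and the conclusion reads
$$
\nabla^2 \log H(x,t) + \frac{1}{2t} g \ge -\epsilon\Big(m + \log\frac{B}{t^{m/2}H}\Big) g = -\epsilon A\, g
$$
for $t \in (0,T]$, where $A := m + \log\frac{B}{t^{m/2}H}$ and $B = B(M)$.

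Next I would estimate $A$ from above. Plugging $K = 0$ into \eqref{lowerbound} gives $H(x,t) \ge C\, t^{-m/2}\exp\big(-\tfrac{d^2(x)}{4t} - \tfrac m2\big)$ for some $C = C(M)$, and taking logarithms yields
$$
A = m + \log\frac{B}{t^{m/2}H} \le m + \log B - \log C + \frac m2 + \frac{d^2(x)}{4t} =: C_0 + \frac{d^2(x)}{4t},
$$
with $C_0 = C_0(M)$. Since $\epsilon > 0$ and $g$ is positive definite, the two displays combine to give $\nabla^2 \log H + \tfrac{1}{2t} g \ge -\epsilon A\, g \ge -\epsilon\big(C_0 + \tfrac{d^2(x)}{4t}\big) g$ on $(0,T]$, which is exactly \eqref{harnack}.

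I do not expect a genuine obstacle here: all the analytic work — the small-time matrix Harnack inequality of Theorem \ref{thm-harnack} and the two-sided Gaussian bounds for $H$ — has already been carried out, and the corollary is a routine combination. The only points requiring a little care are verifying that the dependence $T = T(\epsilon, K, L, m)$ collapses to $T = T(M, \epsilon)$ once $K = 0$ and $L, m$ are determined by $M$, and checking that $B$, $C$, and hence $C_0$, depend on $M$ alone.
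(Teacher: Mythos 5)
Your proposal is correct and follows exactly the paper's route: apply Theorem \ref{thm-harnack} to $f=H$ with $K=0$ (so $C=\epsilon$ and $L=L(M)$ by compactness), then bound $A=m+\log\frac{B}{t^{m/2}H}$ by $C_0+\frac{d^2(x)}{4t}$ using the heat-kernel lower bound \eqref{lowerbound}. No discrepancies with the paper's argument.
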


\section{Monotonicity of parabolic frequency}
In this section, we prove that the parabolic frequency on compact manifolds with nonnegative sectional curvature is almost
 increasing.
\begin{theorem}\label{thm-sd}
Let $(M^m,g)$ be a compact Riemannian manifold with nonnegative sectional curvature.
Assume $u(x,t)$ is a solution to the heat equation (\ref{heatequation}) with the initial data $u_0(x)$ satisfying
$$|u_0(x)|+|\nabla u_0(x)|+|\nabla^2 u_0(x)|\le a_0\cdot(\int_M |\nabla u_0|^2 \ d\mu)^{\frac 1 2},$$
for some positive $a_0$.
There exists a constant $T>0$, depending on the manifold $M$ and $a_0$  such that
$$
e^{t^{1/2}}\cdot t\cdot\frac{\int_M H(x,t)\cdot |\nabla u|^2(x, T-t)\ d\mu}{\int_M H(x,t)\cdot u^2(x, T-t)\ d\mu}
$$
is an increasing function of $t$ in $[0,T]$.
\end{theorem}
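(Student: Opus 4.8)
The plan is to pass to the backward heat equation and run the Poon--Ni frequency computation, with Hamilton's matrix Harnack replaced by its small-time version in Corollary \ref{cor2}, and then to absorb the resulting error by means of the factor $e^{t^{1/2}}$ together with the concentration of the heat kernel. Set $w(x,t):=u(x,T-t)$, so that $w_t+\Delta w=0$ on $M\times[0,T]$, write $d\mu_t:=H(x,t)\,d\mu$, and put
$$h(t)=\int_M w^2\,d\mu_t,\qquad D(t)=\int_M|\nabla w|^2\,d\mu_t,\qquad I(t)=\frac{t\,D(t)}{h(t)}.$$
Since $u$ is nonconstant, $w(\cdot,t)$ is nonconstant for every $t$, so $h(t),D(t)>0$ on $(0,T]$, and the assertion is equivalent to $\frac{d}{dt}\log I(t)\ge-\frac1{2\sqrt t}$ on $(0,T)$. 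Using $H_t=\Delta H$, $w_t=-\Delta w$, the Bochner formula, and integration by parts on the closed manifold $M$, one computes the first variations
$$h'(t)=2D(t),\qquad D'(t)=2\int_M\bigl(|\nabla^2 w|^2+\operatorname{Ric}(\nabla w,\nabla w)\bigr)\,d\mu_t.$$

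The crucial step is a lower bound for $D'(t)$. Introduce the drift Laplacian $Lw:=\Delta w+\langle\nabla\log H,\nabla w\rangle$, which is self-adjoint with respect to $d\mu_t$ and satisfies $D(t)=-\int_M w\,(Lw)\,d\mu_t$; Cauchy--Schwarz then gives $D(t)^2\le h(t)\int_M(Lw)^2\,d\mu_t$. The weighted Bochner identity reads $\int_M(Lw)^2\,d\mu_t=\int_M|\nabla^2 w|^2\,d\mu_t+\int_M(\operatorname{Ric}-\nabla^2\log H)(\nabla w,\nabla w)\,d\mu_t$, and Corollary \ref{cor2}, applied with a fixed $\epsilon>0$ (producing $T_0=T_0(M,\epsilon)$ and $C_0=C_0(M)$), bounds $-\nabla^2\log H\le\bigl(\tfrac1{2t}+\epsilon(C_0+\tfrac{d^2}{4t})\bigr)g$ for $t\in(0,T_0]$. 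Combining these with the formula for $D'(t)$ yields, writing $\mathcal R(t):=\int_M\bigl(C_0+\tfrac{d^2}{4t}\bigr)|\nabla w|^2\,d\mu_t$,
$$D'(t)\ \ge\ \frac{2D(t)^2}{h(t)}-\frac{D(t)}{t}-2\epsilon\,\mathcal R(t),$$
hence, using $h'=2D$,
$$\frac{d}{dt}\log I(t)=\frac1t+\frac{D'(t)}{D(t)}-\frac{2D(t)}{h(t)}\ \ge\ -\frac{2\epsilon\,\mathcal R(t)}{D(t)},$$
and therefore
$$\frac{d}{dt}\log\bigl(e^{t^{1/2}}I(t)\bigr)\ \ge\ \frac1{2\sqrt t}-2\epsilon C_0-\frac{\epsilon}{2t}\cdot\frac{\int_M d^2\,|\nabla w|^2\,d\mu_t}{\int_M|\nabla w|^2\,d\mu_t}.$$

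It remains to choose $\epsilon$ and then $T$ (in terms of $M$ and $a_0$) so that the right-hand side of the last inequality is nonnegative on $(0,T)$; this reduces to the heat-kernel second-moment estimate
$$\int_M d^2(x)\,|\nabla w(x,t)|^2\,H(x,t)\,d\mu\ \le\ C\sqrt t\int_M|\nabla w(x,t)|^2\,H(x,t)\,d\mu,\qquad t\in(0,T].$$
The hypothesis on $u_0$, combined with $\operatorname{Ric}\ge0$ and the maximum principle applied to $u$, $|\nabla u|$ and $\Delta u$, gives the uniform bounds $\|w\|_{C^1(M\times[0,T])}\le C(a_0)\bigl(\int_M|\nabla u_0|^2\bigr)^{1/2}$ and $\int_M|\nabla w(\cdot,t)|^2\,d\mu\ge\tfrac12\int_M|\nabla u_0|^2\,d\mu$ once $T$ is small. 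Splitting $M$ into $B(o,t^{1/4})$ and its complement and using the Gaussian bounds \eqref{upper}--\eqref{lowerbound} together with Bishop--Gromov volume comparison, the inner ball contributes at most $\sqrt t\,D(t)$ to the left-hand side, while the complement contributes a quantity that is super-exponentially small in $t$; this last term is negligible against $\sqrt t\,D(t)$ because $D(t)$ decays at most polynomially as $t\to0$ (equivalently, the solution has finite vanishing order at $o$, which follows from unique continuation for the heat equation). Taking $\epsilon=\epsilon(M,a_0)$ and $T=T(M,a_0)$ sufficiently small then closes the argument.

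I expect the final step — the second-moment estimate, which ultimately rests on a sufficiently good lower bound for $D(t)$ near $t=0$ so that the Gaussian tail of $H(\cdot,t)$ is harmless — to be the main obstacle; everything preceding it is a fairly routine adaptation of the Poon--Ni argument. The point being exploited is that the defect in Corollary \ref{cor2} is of order $\epsilon(1+d^2/t)$ rather than $O(1)$, and such a defect, once integrated against the concentrated weight $H(\cdot,t)\,d\mu$, is small enough to be defeated by the correction factor $e^{t^{1/2}}$.
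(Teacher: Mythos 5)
Your frequency computation is essentially the paper's: the same quantities $Z=h$ and $D$, the same first variations, the same substitution of Corollary \ref{cor2} into $D'(t)$ (you package it via the drift Laplacian and the weighted Bochner identity, the paper completes the square explicitly, but these are identical), and the same reduction to the second-moment estimate $\int_M d^2|\nabla w|^2H\,d\mu\le C\sqrt t\,D(t)$ via a near/far splitting. The problem is the far region. After the split you need the Gaussian tail, which is $O(e^{-c\,t^{-1/2}})$, to be dominated by $\sqrt t\,D(t)$, and for this you need a lower bound on $D(t)$ as $t\to0$ that beats $e^{-c\,t^{-1/2}}$. You assert $D(t)\gtrsim t^N$ on the grounds that ``the solution has finite vanishing order at $o$, which follows from unique continuation for the heat equation.'' This is the genuine gap: note that $D(t)\to|\nabla u(o,T)|^2$, which may vanish, so the decay rate of $D$ is governed by the vanishing order of the caloric function at $(o,T)$ — and a quantitative (polynomial) bound on that vanishing order is precisely the kind of statement that frequency monotonicity is designed to prove. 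Indeed the paper deduces its unique continuation result (Corollary \ref{Co}) \emph{from} Theorem \ref{thm-sd}, so invoking unique continuation here is circular unless you import an independent quantitative Carleman-type theorem, which would be a heavy external input that the paper deliberately avoids and which you do not supply.

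The paper closes this gap with a self-contained argument (Lemma \ref{pro}) that you are missing. It introduces $W(t)=\int_M H|\nabla^2u|^2\,d\mu$ and runs a second frequency-type monotonicity argument one derivative up: applying the small-time matrix Harnack inequality to the evolution of $X=\nabla u$ (which satisfies $X_t-\Delta X=-\operatorname{Ric}(X)$), it shows $W(t)/D(t)\le C\,t^{-1-C_M\epsilon}$, hence $(\log D)'\le C\,t^{-1-C_M\epsilon}$, and integration gives the stretched-exponential bound $D(t)\ge c\,e^{-C\,t^{-C_M\epsilon}}$. Choosing $\epsilon$ with $C_M\epsilon=\tfrac14$ makes this bound decay strictly slower than the Gaussian tail $e^{-t^{-1/2}/10}$, which is all that is needed — no polynomial bound and no unique continuation input. (This is also where the hypothesis on $|\nabla^2u_0|$ enters, as it controls $W/D$ at $t=T$.) Your proof is incomplete without either reproducing such a lower bound for $D$ or supplying and justifying the external quantitative vanishing-order theorem you allude to.
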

\begin{remark} The initial condition in Theorem \ref{thm-sd}
$$|u_0(x)|+|\nabla u_0(x)|+|\nabla^2 u_0(x)|\le a_0\cdot(\int_M |\nabla u_0|^2 \ d\mu)^{\frac 1 2}$$
is equivalent to that $u_0(x)$ is not a constant. In fact, if $u_0=constant$,  then $u(x,t)$ is also a constant and Theorem \ref{thm-sd} is trivial. \end{remark}
To begin with, we define the following quantities:
\begin{eqnarray*}
Z(t)&=&\int_M H(x,t)\cdot u^2(x, \tau)\ d\mu,\\
D(t)&=&\int_M H(x,t)\cdot |\nabla u|^2(x, \tau)\ d\mu,
\end{eqnarray*}
where $\tau=T-t$.
We first show the monotonicity  of quantities $D(t)$ and $Z(t)$,  by calculating their evolution equations.
\begin{proposition}\label{thm-fd} For $t\in[0,T]$, it holds that
\begin{equation}
Z'(t)=2D(t)>0.
\end{equation}
Assume further that $\operatorname{Ric}_g\ge-(m-1)K$, then
\begin{equation}
D'(t)\ge -2(m-1)K D(t),
\end{equation}
and therefore $e^{2(m-1)Kt}D(t)$ is monotone increasing.
\end{proposition}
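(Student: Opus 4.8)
The plan is to obtain both identities by differentiating under the integral sign, exploiting that the heat kernel satisfies $\partial_t H = \Delta H$ in the spatial variable, while $u(x,\tau)$, with $\tau = T-t$, satisfies $\tfrac{d}{dt}u(x,\tau) = -\Delta u(x,\tau)$ by the chain rule. Throughout, $u$ and its derivatives are understood to be evaluated at $(x,\tau)$, and since $M$ is compact every integration by parts is free of boundary terms.

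For $Z$, differentiating gives
\[
Z'(t) = \int_M (\Delta H)\,u^2\,d\mu - 2\int_M H\,u\,\Delta u\,d\mu .
\]
Integrating by parts twice in the first term and using $\Delta(u^2) = 2u\Delta u + 2|\nabla u|^2$, the two $u\Delta u$ contributions cancel, so that $Z'(t) = 2\int_M H|\nabla u|^2\,d\mu = 2D(t)$. For the strict positivity on $(0,T]$: the heat kernel of a compact manifold is strictly positive for $t>0$, hence $D(t)$ vanishes only if $u(\cdot,\tau)$ is a constant function; but $u(\cdot,\tau)$ is nonconstant for each $\tau\in[0,T]$ since $u_0$ is nonconstant (by backward uniqueness — or time-analyticity — of the heat flow on a compact manifold), and therefore $D(t)>0$.

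For $D$, the same differentiation yields
\[
D'(t) = \int_M (\Delta H)\,|\nabla u|^2\,d\mu - 2\int_M H\,\langle\nabla u,\nabla\Delta u\rangle\,d\mu .
\]
Integrating by parts in the first term and invoking the Bochner formula $\Delta|\nabla u|^2 = 2|\nabla^2 u|^2 + 2\langle\nabla u,\nabla\Delta u\rangle + 2\operatorname{Ric}(\nabla u,\nabla u)$, the $\langle\nabla u,\nabla\Delta u\rangle$ terms cancel and
\[
D'(t) = 2\int_M H\,|\nabla^2 u|^2\,d\mu + 2\int_M H\,\operatorname{Ric}(\nabla u,\nabla u)\,d\mu .
\]
Discarding the nonnegative Hessian term and using $\operatorname{Ric}_g \ge -(m-1)Kg$ pointwise together with $H>0$ gives $D'(t) \ge -2(m-1)K\int_M H|\nabla u|^2\,d\mu = -2(m-1)K\,D(t)$, whence $\tfrac{d}{dt}\bigl(e^{2(m-1)Kt}D(t)\bigr) = e^{2(m-1)Kt}\bigl(D'(t)+2(m-1)K\,D(t)\bigr)\ge 0$, which is the asserted monotonicity.

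I do not anticipate a genuine obstacle: once the Bochner identity is in hand the computation is routine. The two points deserving a little care are the sign bookkeeping caused by the reversed time $\tau = T-t$ — it is precisely the resulting $-\Delta$ acting on $u$ that produces the cancellation of the $u\Delta u$ and $\langle\nabla u,\nabla\Delta u\rangle$ terms — and, if one wants the strict inequality $D(t)>0$ rather than merely $D(t)\ge 0$, the verification that $u(\cdot,\tau)$ never degenerates to a constant on $[0,T]$.
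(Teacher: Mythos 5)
Your proof is correct and follows essentially the same route as the paper: differentiate under the integral using $\partial_t H=\Delta H$ and $u_\tau=\Delta u$, integrate by parts on the compact manifold, and apply the Bochner formula so that the $\langle\nabla u,\nabla\Delta u\rangle$ terms cancel, leaving $D'(t)=2\int_M H|\nabla^2u|^2\,d\mu+2\int_M H\,\operatorname{Ric}(\nabla u,\nabla u)\,d\mu$. Your extra remark justifying strict positivity of $D(t)$ via nonconstancy of $u(\cdot,\tau)$ is a welcome refinement of the paper's ``which is clearly positive.''
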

\begin{proof}
Direct calculations show that
\begin{eqnarray}\label{dZ}
Z'(t)&=&\int_M H_t\cdot u^2\ d\mu-\int_M 2Hu\cdot u_\tau\ d\mu\\
&=&\int_M H\cdot \triangle u^2\ d\mu-\int_M 2Hu\cdot u_\tau \ d\mu\nonumber\\
&=&2\int_M H\cdot |\nabla u|^2 \ d\mu\nonumber\\
&=&2D(t)\nonumber,
\end{eqnarray}
which is clearly positive.

We compute that
\begin{eqnarray}\label{dD}
D'(t)&=&\int_M H_t |\nabla u|^2 -2H\langle \nabla u, \nabla u_\tau\rangle\ d\mu \\
&=&\int_M \triangle H |\nabla u|^2 -2H\langle \nabla u, \nabla u_\tau\rangle\ d\mu\nonumber\\
&=&\int_M  2H \langle \nabla u, \nabla \triangle  u\rangle+2H|\nabla^2 u|^2\ d\mu\nonumber\\
&&+\int_M 2H\langle \nabla u, \operatorname{Ric}(\nabla u)\rangle -2H\langle \nabla u, \nabla u_\tau\rangle\ d\mu\nonumber\\
&=&\int_M  2H |\nabla^2 u|^2 \ d\mu+2\int_M  H \langle \nabla u, \operatorname{Ric}(\nabla u)\rangle\ d\mu\nonumber .
\end{eqnarray}
Since the Ricci curvature is bounded from below by $-(m-1)K$, then
\begin{eqnarray*}
D'(t)&\ge& \int_M  2H |\nabla^2 u|^2 \ d\mu-2(m-1)K\int_M  H |\nabla u|^2\ d\mu \\
&\ge&-2(m-1)K D(t),
\end{eqnarray*}
completing the proof.
\end{proof}

To prove Theorem \ref{thm-sd}, we derive a lower bound on $D(t)$ first.
\begin{lemma}\label{pro}
Let $(M,g)$ and $u(x,t)$ be same as in Theorem \ref{thm-sd}. Assume further that $\int_M |\nabla u_0|^2 \ d\mu=1$. Then for any $\epsilon>0$, there exist constants $T=T(M,\epsilon)$, $C_M=C(M)$, $c=c(M, \epsilon)$ and $C=C(M, \epsilon, a_0)$, such that
\begin{equation}\label{eD}
D(t)\ge c e^{-C\cdot t^{-C_M \epsilon}}
\end{equation}
for $t\in[0,T]$.
\end{lemma}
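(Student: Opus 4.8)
The plan is to integrate a differential inequality for $\log D(t)$ backwards from $t=T$, exploiting that $D(T)$ has a lower bound insensitive to the fine structure of $u$. First I would record the a priori bounds: the maximum principle gives $\sup_{M\times[0,T]}|u|\le\sup_M|u_0|$, and interior parabolic estimates on the compact manifold $M$ propagate the hypothesis $\|u_0\|_{C^2}\le a_0$ (which, under $\int_M|\nabla u_0|^2\,d\mu=1$, is exactly the pointwise condition of Theorem~\ref{thm-sd}) to $\sup_{\tau\in[0,T]}\big(|u|+|\nabla u|+|\nabla^2 u|\big)(\cdot,\tau)\le C_\ast$ with $C_\ast=C_\ast(M,a_0)$; in particular $D(t)\le C_\ast^2$. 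Since $\operatorname{Sect}_g\ge 0$ forces $\operatorname{Ric}_g\ge 0$, the lower heat-kernel bound \eqref{lowerbound} with $K=0$ gives $\min_{x\in M}H(x,T)\ge\kappa(M,T)>0$ for $T\le1$, whence, using the normalization,
\[
D(T)=\int_M H(x,T)\,|\nabla u_0|^2(x)\,d\mu\ \ge\ \kappa(M,T)\int_M|\nabla u_0|^2\,d\mu\ =\ \kappa(M,T).
\]
As $T$ will be fixed in terms of $(M,\epsilon)$ below, this reads $D(T)\ge c$ with $c=c(M,\epsilon)$.

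The core is a differential inequality for $D$. By \eqref{dD}, $D'(t)=2\int_M H|\nabla^2u|^2\,d\mu+2\int_M H\operatorname{Ric}(\nabla u,\nabla u)\,d\mu\ge0$, and the Ricci term is at most $2(\max_M|\operatorname{Ric}|)\,D(t)$. The point is to bound the Hessian energy $\int_M H|\nabla^2u|^2\,d\mu$ by $\tfrac1t$ times a controlled multiple of $D(t)$: I would integrate by parts to transfer two derivatives off $u$ onto $\log H$, and invoke the small-time matrix Harnack estimate of Corollary~\ref{cor2}, $\nabla^2\log H+\tfrac1{2t}g\ge-\epsilon(C_0+\tfrac{d^2(x)}{4t})g$, together with the gradient/Laplacian bounds \eqref{grad}--\eqref{laplace}; the weighted distance moments that occur are estimated from \eqref{upper}--\eqref{lowerbound} via $\int_M H\,d(x)^{k}\,d\mu=O(t^{k/2})$. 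The exact part of the Harnack estimate contributes a term bounded by $\tfrac{C(M)}{t}D(t)$, while the non-exact $\epsilon$-part, once it is compared against $D(t)$ rather than estimated crudely by $C_\ast^2$, produces the slowly growing correction responsible for the unusual rate; schematically one is led to
\[
\big(\log D\big)'(t)\ \le\ \frac{C(M)}{t}\ +\ \frac{C_M\,\epsilon}{t}\,\log\!\Big(\frac{C_\ast^2}{D(t)}\Big)\ +\ \frac{C}{t^{1-\delta_0}}
\]
for $t\in(0,T]$, with $C=C(M,\epsilon,a_0)$, $C_M=C(M)$ and some $\delta_0\in(0,1)$.

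To close the argument, set $E(t)=\log\big(C_\ast^2/D(t)\big)\ge0$, so the displayed inequality reads $E'(t)\ge-\tfrac{C_M\epsilon}{t}E(t)-\tfrac{C(M)}{t}-C\,t^{\delta_0-1}$. Multiplying by $t^{C_M\epsilon}$ and integrating from $t$ up to $T$, where $E(T)\le\log(C_\ast^2/c)$ by the lower bound on $D(T)$, one gets $E(t)\le C'\,t^{-C_M\epsilon}$ with $C'=C'(M,\epsilon,a_0)$ (the $\tfrac{C(M)}{t}$ and $t^{\delta_0-1}$ terms produce only a logarithmic, resp. bounded, contribution, hence are swallowed after enlarging $C'$ and shrinking $T=T(M,\epsilon)$). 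This means $D(t)\ge C_\ast^2\,e^{-C't^{-C_M\epsilon}}$ on $(0,T]$, and since $C_\ast^2\ge\sup_M|\nabla u_0|^2\ge\operatorname{Vol}(M)^{-1}$ we may replace the leading factor by a constant $c=c(M,\epsilon)$, obtaining \eqref{eD} with $C_M=C(M)$ and $C=C(M,\epsilon,a_0)$.

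The hard part is precisely the second step: the naive bound $\int_M H|\nabla^2u|^2\,d\mu\le C_\ast^2$ renders the backward Gronwall argument vacuous, so one must genuinely extract from the matrix Harnack inequality a reverse-Poincar\'e/Caccioppoli-type estimate for $\nabla u$ against the concentrating weight $H(\cdot,t)$, showing $\int_M H|\nabla^2u|^2\,d\mu$ is comparable to $\tfrac1t D(t)$ up to the $\epsilon$-loss. One also has to track the dependence of constants carefully so that $c$ ends up depending only on $(M,\epsilon)$ while $a_0$ enters solely through $C$; it is the $\epsilon$-error in Corollary~\ref{cor2}—which disappears as $\epsilon\to0$ but is present for each fixed $\epsilon$—that makes the exponent $C_M\epsilon$ appear, rather than yielding a clean polynomial lower bound on $D(t)$.
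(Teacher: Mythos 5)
Your endpoints are right — the normalization plus the heat-kernel lower bound \eqref{lowerbound} give $D(T)\ge c(M,\epsilon)$, and integrating a bound on $(\log D)'$ backwards from $T$ is exactly how the conclusion is reached — but the core step, which you yourself flag as "the hard part," is where the proposal has a genuine gap. You propose to prove a reverse-Poincar\'e bound $W(t):=\int_M H|\nabla^2u|^2\,d\mu\lesssim \tfrac1t D(t)$ (up to an $\epsilon$-loss) directly from Corollary \ref{cor2} by integrating by parts onto $\log H$. That cannot work as stated: the matrix Harnack inequality controls $\nabla^2\log H$ from \emph{below} only, so after integration by parts it yields a \emph{lower} bound on $\int \Delta H\,|\nabla u|^2$ (this is precisely how it is used in the proof of Theorem \ref{thm-sd} to bound $D'$ from below), not the upper bound on $W$ you need. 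Your displayed "schematic" inequality, in particular the feedback term $\tfrac{C_M\epsilon}{t}\log(C_*^2/D(t))$, is asserted rather than derived, and I do not see how the tools you cite produce it.

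The paper closes this gap by a different mechanism: it runs the frequency-monotonicity argument one derivative higher. Setting $X=\nabla u$, which satisfies $X_t-\Delta X=-\operatorname{Ric}(X)$, it computes $W'D-WD'$ (with $W=\int H|\nabla X|^2$), applies the Harnack inequality \eqref{Hharnack} together with the gradient bound \eqref{grad} to the error terms, and obtains a Riccati-type inequality $\bigl(W/D\bigr)'\ge-\tfrac{C_M}{t\epsilon}-\tfrac{1+C_M\epsilon}{t}\tfrac{W}{D}$. Hence $t^{1+C_M\epsilon}\tfrac{W}{D}$ plus an explicit correction is nondecreasing, and evaluating at $t=T$ — where $W(T)\le a_0^2$ and $D(T)\ge c$ by the initial-data hypothesis — gives $W(t)/D(t)\le C(M,\epsilon,a_0)\,t^{-1-C_M\epsilon}$. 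Only then does one get $(\log D)'\le C\,t^{-1-C_M\epsilon}$ from \eqref{dD1}, whose antiderivative $t^{-C_M\epsilon}$ is the true source of the exponent in \eqref{eD}. So the $C_M\epsilon$ in the exponent comes from the Gronwall weight in the almost-monotonicity of $W/D$, not from comparing the $\epsilon$-error against $D(t)$ as you suggest. To repair your proof you would need to replace the direct bound on $W$ by this propagated bound on the ratio $W/D$ from time $T$ backwards.
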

\begin{proof}
Recall from  Corollary \ref{cor2} that for any $\epsilon>0$, there exists a constant $T=T(M, \epsilon)$ and $C_0=C_0(M)$, such that
\begin{equation}\label{Hharnack}
\nabla^2 H-\frac{\nabla H\otimes \nabla H}{H}+\frac{H}{2t}g\ge-\epsilon(C_0+\frac{d^2(x)}{4t})Hg,
\end{equation}
for $t\in (0,T]$.

Let $X=\nabla u$, and then it follows from the  heat equation (\ref{heatequation}) that
\begin{equation}\label{heatequationX}
X_t-\triangle X= -\operatorname{Ric}(X).
\end{equation}
Let
$$
W(t)= \int_M H(x,t)\cdot|\nabla X|^2(x, \tau)\ d\mu,
$$
then we see from (\ref{dD}) that
\begin{eqnarray}\label{dD1}
D'(t)=2W(t)+2\int_M  H \big\langle X, \operatorname{Ric}(X)\big\rangle\  d\mu.
\end{eqnarray}
Direct computations show that
\begin{eqnarray*}
W'(t)&=&\int_M H_t |\nabla X|^2 -2H\langle \nabla X, \nabla X_\tau\rangle\ d\mu\\
&=&\int_M \triangle H |\nabla X|^2 +2H\langle \triangle X, X_\tau\rangle+2H\langle \nabla_{\nabla \log H}X, X_\tau\rangle\ d\mu\\
&=&\int_M \triangle H |\nabla X|^2 +2H\langle X_\tau+\operatorname{Ric}(X), X_\tau\rangle+2H\langle \nabla_{\nabla \log H}X, X_\tau\rangle\ d\mu,
\end{eqnarray*}
where we used equation (\ref{heatequationX}).
Using integration by parts, we get
\begin{eqnarray}\label{D-1}
\\
\int_M \triangle H |\nabla X|^2 \ d\mu&=&-2\int_M H_i\langle X_j, X_{ji}\rangle\ d\mu\nonumber \\
&=&-2\int_M H_i\langle X_j, X_{ij}\rangle\ d\mu+2\int_M H_i R_{ijkl} u_{kj}u_l\ d\mu\nonumber\\
&=&2\int_M H_{ij}\langle X_j, X_i\rangle\ d\mu+2\int_M H\langle \nabla_{\nabla \log H}X, \triangle X\rangle\ d\mu\nonumber\\
&&+ 2\int_M H_i R_{ijkl} u_{kj}u_l\ d\mu\nonumber\\
&=&2\int_M H_{ij}\langle X_j, X_i\rangle\ d\mu+2\int_M H\langle \nabla_{\nabla \log H}X, X_\tau+\operatorname{Ric}(X)\rangle\ d\mu\nonumber\\
&&+ 2\int_M H_i R_{ijkl} u_{kj}u_l\ d\mu.\nonumber
\end{eqnarray}
Recall from Harnack inequality (\ref{Hharnack}) that
$$
H_{ij}-\frac{H_iH_j}{H}+\frac{H}{2t}g_{ij}\ge -E H g_{ij},
$$
with $E=\epsilon(C_0+\frac{d^2(x)}{4t})$, we have
\begin{equation}\label{D-2}
2\int_M H_{ij}\langle X_j, X_i\rangle\ d\mu\ge 2 \int_M  H |\nabla_{\nabla \log H}X|^2\ d\mu-\frac{W(t)}{t}-2\int_M E H |\nabla X|^2\ d\mu.
\end{equation}

Now we estimate the curvature involved term  by
\begin{equation}\label{D-3}
2\int_M H_i R_{ijkl} u_{kj}u_l\ dx\ge-C_M\int_M |\nabla H|\cdot|X|\cdot|\nabla X|\ d\mu.
\end{equation}
Here and thereafter, we always use $C_M$ to denote the constant depending only on the manifold, though it may change from line to line. Combining above calculations and estimates (\ref{D-1}--\ref{D-3}) together, we conclude
\begin{eqnarray*}
W'(t)&\ge&\int_M 2 H |\nabla_{\nabla \log H}X|^2\ d\mu-\frac{W(t)}{t}-2\int_M E H |\nabla X|^2\ d\mu\\
&&+\int_M 2H\langle \nabla_{\nabla \log H}X, X_\tau+\operatorname{Ric}(X)\rangle\ dx-C_M\int_M |\nabla H|\cdot|X|\cdot|\nabla X|\ d\mu\\
&&+\int_M 2H\langle X_\tau+\operatorname{Ric}(X), X_\tau\rangle+2H\langle \nabla_{\nabla \log H}X, X_\tau\rangle\ d\mu\\
&=&\int_M 2 H \big|\nabla_{\nabla \log H}X+X_\tau\big|^2+2H\langle \operatorname{Ric}(X), \nabla_{\nabla \log H}X+X_\tau\rangle\ d\mu\\
&&-\frac{W(t)}{t}-2\int_M E H |\nabla X|^2\ d\mu-C_M\int_M |\nabla H|\cdot|X|\cdot|\nabla X|\ d\mu\\
&=&\int_M 2 H \big|\nabla_{\nabla \log H}X+X_\tau+\frac 1 2 \operatorname{Ric}(X)\big|^2\ d\mu-\frac 1 2\int_M H |\operatorname{Ric}(X)|^2\ d\mu\\
&&-\frac{W(t)}{t}-2\int_M E H |\nabla X|^2\ d\mu-C_M\int_M |\nabla H|\cdot|X|\cdot|\nabla X|\ d\mu,
\end{eqnarray*}
using the evolution equation (\ref{dD1}) for $D(t)$, we get
 \begin{eqnarray*}
W'(t)D(t)-W(t) D'(t)&\ge& \int_M 2 H |\nabla_{\nabla \log H}X+X_\tau+\frac 1 2 \operatorname{Ric}(X)|^2\ d\mu\cdot\int_M H |X|^2\ d\mu\\
&&-\frac 1 2\int_M H |\operatorname{Ric}(X)|^2\ d\mu\cdot D(t)-\frac{W(t)}{t}\cdot D(t)-2\int_M E H |\nabla X|^2\ d\mu\cdot D(t)\\
&&-C_M\int_M |\nabla H|\cdot|X|\cdot|\nabla X|\ d\mu\cdot D(t)-2W\cdot\Big(W+\int_M H\langle \operatorname{Ric}(X), X\rangle\ d\mu\Big).
\end{eqnarray*}
Integration by parts yields
\begin{eqnarray*}
\int_M H|\nabla X|^2\ d\mu&=&-\int_M H\langle \nabla_{\nabla \log H}X, X\rangle+H\langle X_\tau+\operatorname{Ric}(X), X \rangle\ d\mu\\
&=&-\int_M H\langle \nabla_{\nabla \log H}X+X_\tau, X\rangle\ d\mu-\int_M H\langle \operatorname{Ric}(X), X \rangle\ d\mu,
\end{eqnarray*}
then we have
\begin{eqnarray*}
2W\cdot\big(W+\int_M H\langle \operatorname{Ric}(X), X\rangle\ d\mu\big)&=&2\Big(\int_M  H \big\langle \nabla_{\nabla \log H}X+X_\tau+\frac 1 2 \operatorname{Ric}(X), X\big\rangle\ d\mu\Big)^2\nonumber \\
&&-\frac{1}{2}\Big(\int_M H \langle \operatorname{Ric}(X), X\rangle\ d\mu\Big)^2.
\end{eqnarray*}
Thus, using H\"older's inequality, we get
 \begin{eqnarray}\label{wd}
 W'D-W D'&\ge&-\frac 1 2\int_M H |\operatorname{Ric}(X)|^2\ dx\cdot D-\frac{W}{t}\cdot D\\
&&-2\int_M E H |\nabla X|^2\ d\mu\cdot D-C_M\int_M |\nabla H||X||\nabla X|\ d\mu\cdot D.\nonumber
\end{eqnarray}
Since the gradient estimates (\ref{grad}) gives
\begin{eqnarray*}
|\nabla H|\le \frac{\sqrt{2} H}{\sqrt{t}}\big(C_0+\frac{d^2}{4t}\big)^{1/2},
\end{eqnarray*}
we then estimate
\begin{eqnarray*}
C_M\int_M |\nabla H||X||\nabla X|\ d\mu &\le& C_M\int_M \frac{\sqrt{2}H}{\sqrt{t}}\big(C_M+\frac{d^2}{4t}\big)^{1/2}|X||\nabla X|\ d\mu\\
&\le&C_M \int_M\frac{1} {\epsilon t}H|X|^2\ d\mu+C_M\int_M H|\nabla X|^2\big(\epsilon+\frac{\epsilon }{t}\big)\ d\mu\\
&\le&\frac{C_M}{\epsilon t}D(t)+C_M (\epsilon+\frac{\epsilon}{t})W(t).
\end{eqnarray*}
Observing
\begin{eqnarray*}
2\int_M E H |\nabla X|^2\ d\mu \le C_M(\epsilon+\frac{\epsilon}{t}) W,
\end{eqnarray*}
and plugging above estimates to (\ref{wd}), we derive
\begin{eqnarray}
W'(t)D(t)-W(t) D'(t)&\ge&- C_M D^2-\frac{W D}{t}-C_M (\epsilon+\frac{\epsilon}{t})W D -\frac{C_M D^2}{t\epsilon}\\
&\ge&-\frac{C_M}{t\epsilon}D^2-\big(\frac{1+C_M \epsilon}{t}\big)W D\nonumber
\end{eqnarray}
for $t\le T$. Thus
$$
\Big(\frac{W(t)}{D(t)}\Big)'\ge-\frac{C_M}{t\epsilon}-\Big(\frac{1+C_M\epsilon}{t}\Big)\frac{W(t)}{D(t)},
$$
which implies
$$t^{1+C_M \epsilon}\frac{W(t)}{D(t)}+\frac{C_M}{(1+C_M\epsilon)\epsilon} t^{1+C_M \epsilon}$$
is monotone nondecreasing in $[0,T]$. Therefore
$$
\frac{W(t)}{D(t)}\le t^{-1-C_M \epsilon}\cdot C(M,  \epsilon, a_0),
$$
for $t\le T$.

From derivative of $D(t)$ in (\ref{dD}) and  above estimate, we obtain that
\begin{eqnarray*}
(\log D(t))'=\frac{2W(t)+2\int_M H\langle \operatorname{Ric}(X), X \rangle\ d\mu}{D(t)} \le t^{-1-C_M \epsilon}\cdot C(M, \epsilon, a_0).
\end{eqnarray*}
Integrating from $t$ to $T$ yields
$$
D(t)\ge D(T) e^{-C(M, \epsilon, a_0) \cdot t^{-C_M\cdot \epsilon}}\ge c(M, \epsilon)\cdot e^{-C(M, \epsilon, a_0) \cdot t^{-C_M\cdot \epsilon}}
$$
proving the lemma.
\end{proof}

Using the above lemma, we give the following estimate.
\begin{lemma}\label{lee}
Let $(M,g)$ and $u(x,t)$ be same as in Theorem \ref{thm-sd}. Assume $\int_M |\nabla u_0|^2 \ d\mu=1$. There exists a constant $T_0=T_0(M, a_0)$, such that
\begin{equation}\label{eeD}
\frac{1}{t^{1/2}}\int_M  H|\nabla u|^2 d^2(x)\ d\mu \le  \frac 3 2 D(t),
\end{equation}
for any $0<t\le T_0$.
\end{lemma}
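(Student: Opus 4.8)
The plan is to control the weighted second moment $\int_M H |\nabla u|^2 d^2(x)\, d\mu$ by splitting the integral into a near region $\{d(x)\le r\}$ and a far region $\{d(x)> r\}$ for a suitable radius $r=r(t)$, and then showing that for small $t$ the far region contributes only a tiny fraction of $D(t)$. First I would recall the upper bound \eqref{upper} for the heat kernel, $H(x,t)\le C t^{-m/2}\exp(-d^2(x)/(5t))$, which gives for the far region
\begin{eqnarray*}
\frac{1}{t^{1/2}}\int_{\{d(x)>r\}} H|\nabla u|^2 d^2(x)\, d\mu
&\le& \frac{C}{t^{(m+1)/2}}\int_{\{d(x)>r\}} e^{-d^2(x)/(5t)} d^2(x)\, |\nabla u|^2\, d\mu.
\end{eqnarray*}
Here the hypothesis that $u$ solves the heat equation with initial data bounded (together with its first two derivatives) by $a_0$ in terms of $(\int_M|\nabla u_0|^2)^{1/2}=1$ is used to get a uniform pointwise bound $|\nabla u|(x,\tau)\le C(M,a_0)$ on $M\times[0,T]$ (a standard parabolic estimate, since the diameter of $M$ is bounded and $\tau=T-t$ stays in a fixed interval). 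Thus the far-region term is at most $C(M,a_0) t^{-(m+1)/2} e^{-r^2/(10t)}\cdot(\operatorname{diam} M)^2\cdot\operatorname{Vol}(M)$, which, once $r$ is a fixed constant (say $r=\operatorname{diam}(M)$, so the near region is all of $M$ and the far region is empty) is trivially zero. The genuine content is therefore not a near/far split but a \emph{lower} bound on $D(t)$: we must show $D(t)$ does not decay too fast.

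The real engine is Lemma \ref{pro}, which gives $D(t)\ge c(M,\epsilon)\, e^{-C(M,\epsilon,a_0)\, t^{-C_M\epsilon}}$ for $t\in[0,T]$. Since $d^2(x)\le (\operatorname{diam} M)^2=:D_M$ on the compact manifold $M$, we have the crude bound
$$
\frac{1}{t^{1/2}}\int_M H|\nabla u|^2 d^2(x)\, d\mu \le \frac{D_M}{t^{1/2}}\int_M H|\nabla u|^2\, d\mu = \frac{D_M}{t^{1/2}} D(t).
$$
This is not good enough by itself because of the $t^{-1/2}$ blowup. The fix is to estimate the integrand more carefully near $o$: where $d(x)$ is small the factor $d^2(x)$ is small, and where $d(x)$ is bounded below the Gaussian factor in \eqref{upper} makes $H$ exponentially small in $1/t$, which beats the $t^{-1/2}$. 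Concretely, for the region $\{d(x)\ge t^{1/4}\}$ we use \eqref{upper} to bound $H\le Ct^{-m/2}e^{-d^2/(5t)}\le Ct^{-m/2}e^{-t^{-1/2}/5}$ on that set, giving a contribution $\le C(M,a_0)t^{-(m+1)/2}e^{-t^{-1/2}/5}$, which decays faster than any power of $t$ and in particular (using the lower bound from Lemma \ref{pro}, whose exponent $t^{-C_M\epsilon}$ grows slower than $t^{-1/2}$ once $\epsilon$ is fixed small) is eventually $\le \frac12 D(t)$. For the region $\{d(x)< t^{1/4}\}$ we bound $d^2(x)\le t^{1/2}$, so
$$
\frac{1}{t^{1/2}}\int_{\{d(x)<t^{1/4}\}} H|\nabla u|^2 d^2(x)\, d\mu \le \int_{\{d(x)<t^{1/4}\}} H|\nabla u|^2\, d\mu \le D(t).
$$
That gives $D(t)+\tfrac12 D(t)$, which overshoots; so instead I would choose the cutoff radius as $r(t)=\eta\, t^{1/2}$ with a small constant $\eta=\eta(M,a_0)>0$, bound $d^2(x)\le \eta^2 t$ on the near region to get near-contribution $\le \eta^2 \sqrt t\, \cdot t^{-1/2}D(t)=\eta^2 D(t)$... wait, that is $\le \eta^2 D(t)$, and choosing $\eta^2\le 1$ gives $\le D(t)$, still not $\tfrac12$. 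The clean choice is $r(t)= (2\log(1/t))^{1/2}\cdot(5t)^{1/2}$ or more simply: pick the near region $\{d(x)<\rho\}$ with $\rho$ a small \emph{fixed} constant so that $d^2(x)\le \rho^2$ there, contributing $\le \rho^2 t^{-1/2}D(t)$ — still has the bad $t^{-1/2}$. So one genuinely needs the far-region decay to dominate; the correct bookkeeping is: near region $\{d<t^{1/4}\}$ contributes $\le D(t)\cdot t^{1/2}\cdot t^{-1/2}=$ — no.

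Let me restate the correct split cleanly: the bound to prove has $\tfrac32$ on the right, so write $\tfrac32 D(t)= D(t)+\tfrac12 D(t)$ and bound the near region by $D(t)$ and the far region by $\tfrac12 D(t)$. The near region is $\{d(x)^2 < t^{1/2}\}$, i.e. $\{d(x) < t^{1/4}\}$, where $d^2(x)< t^{1/2}$ gives $t^{-1/2}\int_{\{d<t^{1/4}\}} H|\nabla u|^2 d^2\, d\mu < \int_{\{d<t^{1/4}\}} H|\nabla u|^2\, d\mu \le D(t)$. The far region is $\{d(x)\ge t^{1/4}\}$, where $H\le Ct^{-m/2}e^{-d^2/(5t)}\le Ct^{-m/2}e^{-t^{-1/2}/5}$ and $|\nabla u|\le C(M,a_0)$, $d^2\le D_M$, $\operatorname{Vol}(M)$ finite, so
$$
\frac{1}{t^{1/2}}\int_{\{d\ge t^{1/4}\}} H|\nabla u|^2 d^2\, d\mu \le C(M,a_0)\, t^{-(m+1)/2} e^{-\frac15 t^{-1/2}}.
$$
By Lemma \ref{pro} with $\epsilon$ chosen once so that $C_M\epsilon< \tfrac12$, we have $D(t)\ge c\, e^{-C t^{-C_M\epsilon}}$ with $C_M\epsilon<1/2$, hence the ratio $t^{-(m+1)/2}e^{-t^{-1/2}/5}/D(t)\le C^{-1} t^{-(m+1)/2} e^{C t^{-C_M\epsilon}-\frac15 t^{-1/2}}\to 0$ as $t\to 0$ since $t^{-1/2}$ dominates $t^{-C_M\epsilon}$. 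Therefore there is $T_0=T_0(M,a_0)$ such that the far-region term is $\le \tfrac12 D(t)$ for $t\le T_0$. Adding the two pieces finishes the proof.

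The main obstacle is making sure the lower bound on $D(t)$ from Lemma \ref{pro} is compatible with the required decay estimate: one must fix the free parameter $\epsilon$ in Lemma \ref{pro} small enough that its exponent $t^{-C_M\epsilon}$ grows strictly slower than the Gaussian gain $t^{-1/2}$ coming from \eqref{upper}, so that the far-region contribution is negligible relative to $D(t)$. Once $\epsilon$ is pinned down this way, all the remaining constants ($c$, $C$, $C_M$, $D_M$, $\operatorname{Vol}(M)$, the uniform gradient bound $C(M,a_0)$ from parabolic regularity) depend only on $M$ and $a_0$, and one reads off $T_0=T_0(M,a_0)$ from the condition that $C(M,a_0) t^{-(m+1)/2}e^{Ct^{-C_M\epsilon}-t^{-1/2}/5}\le \tfrac12 c$.
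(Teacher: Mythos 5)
Your final clean argument is correct and is essentially the paper's proof: split at $d(x)=t^{1/4}$, bound the near region by $D(t)$ using $d^2/t^{1/2}\le 1$, bound the far region by $C(M)t^{-(m+1)/2}e^{-t^{-1/2}/5}$ via the Gaussian upper bound \eqref{upper} (the paper simply uses $\int_M|\nabla u|^2\,d\mu\le 1$ here rather than a pointwise gradient bound), and absorb it into $\tfrac12 D(t)$ by invoking Lemma \ref{pro} with $\epsilon$ fixed so that $C_M\epsilon<1/2$ (the paper takes $C_M\epsilon=1/4$). The exploratory false starts in the middle should be deleted, but the mathematics you settle on matches the paper.
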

\begin{proof}
We first choose $\epsilon$ in Lemma \ref{pro} so that
$$
C_M \epsilon =\frac 1 4 ,
$$
which clearly depends only on $M$. Observing that
\begin{eqnarray*}
\frac{1}{t^{1/2}}\int_M d^2H|\nabla u|^2\ d\mu&=&\int_{d^2(x)\le t^{1/2}}  \frac{d^2}{t^{1/2}} H|\nabla u|^2\ d\mu+\int_{d^2(x)\ge t^{1/2}}  \frac{d^2}{t^{1/2}} H|\nabla u|^2\ d\mu\\
&\le&D(t)+\int_{d^2(x)\ge t^{1/2}}  \frac{d^2}{t^{1/2}} H|\nabla u|^2\ d\mu,
\end{eqnarray*}
then we only need to estimate the integral  $$\int_{d^2(x)\ge t^{1/2}}  \frac{d^2}{t^{1/2}} H|\nabla u|^2\ d\mu.$$

From the upper bound of the heat kernel (\ref{upper}), it follows
\begin{eqnarray*}
\int_{d^2(x)\ge t^{1/2}}  \frac{d^2}{t^{1/2}} H|\nabla u|^2\ d\mu&\le&C(M)\int_{d^2(x)\ge t^{1/2}}  \frac{e^{-d^2/(5t)}}{t^{(m+1)/2}} |\nabla u|^2\ d\mu\\
&=&C(M)\frac{e^{-\frac {1}{5t^{1/2}}}}{t^{(m+1)/2}} \int_{d^2(x)\ge t^{1/2}}|\nabla u|^2  \ d\mu\\
&\le&C(M)e^{-\frac{t^{-\frac 1 2}}{10}} \int_{M}  |\nabla u|^2\ d\mu\\
&\le&C(M)e^{-\frac{t^{-\frac 1 2}}{10}}.
\end{eqnarray*}
From Lemma \ref{pro}
$$
D(t)\ge c(M)\cdot e^{-C(M, a_0) \cdot t^{-\frac{1}{4}}}
$$
 and
$$e^{-\frac{t^{-\frac 1 2}}{10}}=o(c e^{-C t^{-\frac 1 4}}),$$
then there exists a constant
$T_0$, depending on $M$ and $a_0$, such that
$$
\int_{d^2(x)\ge t^{1/2}}  \frac{d^2}{t^{1/2}} H|\nabla u|^2\ d\mu\le \frac 1 2 D(t),
$$
for $t\in(0, T_0]$,
proving the lemma.
 \end{proof}
 Now we are using above lemmas to prove Theorem \ref{thm-sd}.
\begin{proof}[Proof of Theorem \ref{thm-sd}]
Without loss of generality, to prove Theorem \ref{thm-sd}, we assume that
$$
\int_M |\nabla u_0|^2 \ d\mu=1.
$$
Let $T$ be a constant, depending on  $M$ and $a_0$, which is so defined that  Corollary \ref{cor2} holds with $\epsilon=1/2$, i.e.
$$
\nabla^2 H-\frac{\nabla H\otimes \nabla H}{H}+\frac{H}{2t}g\ge-\frac 1 2\big(C_0+\frac{d^2(x)}{4t}\big)Hg.$$
We assume further
$$T\le \min \Big\{T_0, \frac{1}{64C_0^2}\Big\}$$
so that the estimate (\ref{eeD}) holds as $t\le T$.

Noting
\begin{eqnarray*}
D'(t)&=&\int_M H_t\cdot |\nabla u|^2\ d\mu-\int_M 2H\langle \nabla u, \nabla u_\tau \rangle\ d\mu\\
&=&\int_M \triangle H|\nabla u|^2\ d\mu+\int_M 2Hu_\tau^2\ d\mu+\int_M 2\langle\nabla H, \nabla u\rangle u_\tau\ d\mu\\
&=&\int_M 2 H_{ij}u_i u_j\ d\mu+\int_M 2Hu_\tau^2\ d\mu+\int_M 4H\langle\nabla \log H, \nabla u\rangle u_\tau\ d\mu,
\end{eqnarray*}
and matrix Harnack inequality (\ref{harnack}) gives
\begin{eqnarray*}
\int_M 2 H_{ij}u_i u_j\ d\mu \ge \int_M \Big( 2 H|\langle\nabla \log H, \nabla u\rangle|^2-\frac 1 t H |\nabla u|^2-\big(C_0+\frac{d^2(x)}{4t}\big) H |\nabla u|^2\Big)\ d\mu,
\end{eqnarray*}
we obtain
\begin{eqnarray*}
D'(t)&\ge&\int_M 2 H|\langle\nabla \log H, \nabla u\rangle|^2\ d\mu-\frac{D(t)}{t}-\int_M \big(C_0+\frac{d^2(x)}{4t}\big) H |\nabla u|^2\ d\mu\\
&&+\int_M 4H\langle\nabla \log H, \nabla u\rangle u_\tau\ d\mu+\int_M 2Hu_\tau^2\ d\mu\\
&=&\int_M 2 H|\langle\nabla \log H, \nabla u\rangle+u_\tau|^2\ d\mu-\frac{D(t)}{t}-\int_M \big(C_0+\frac{d^2(x)}{4t}\big) H |\nabla u|^2\ d\mu.
\end{eqnarray*}
These imply
\begin{eqnarray*}
D'(t)Z(t)-D(t)Z'(t)&\ge&\int_M 2 H\big|\langle\nabla \log H, \nabla u\rangle+u_\tau\big|^2\ d\mu\cdot \int_M Hu^2\ d\mu
-\frac{D(t)Z(t)}{t}\\&&-\int_M \big(C_0+\frac{d^2(x)}{4t}\big) H |\nabla u|^2\ d\mu\cdot Z(t)-2D^2(t).
\end{eqnarray*}
Since
$$
D(t)=\int_M H |\nabla u|^2\ d\mu=-\int_M Hu\langle\nabla \log H, \nabla u\rangle+H u u_\tau\ d\mu,
$$
then
$$
\int_M 2 H|\langle\nabla \log H, \nabla u\rangle+u_\tau|^2\ d\mu\cdot \int_M Hu^2\ d\mu-2D^2(t)\ge 0
$$
by the H\"older inequality. Then we get
\begin{eqnarray}\label{Ze1}
\Big(\frac{D(t)}{Z(t)}\Big)'&=&\frac{1}{Z^2}\Big(D'(t)Z(t)-D(t)Z'(t)\Big)\\
&\ge&\frac{1}{Z^2}\Big(-\frac{D(t)Z(t)}{t}-\int_M \big(C_0+\frac{d^2(x)}{4t}\big) H |\nabla u|^2\ d\mu\cdot Z(t)\Big)\nonumber\\
&=&-\frac 1 t \frac {D(t)} {Z(t)}-C_0 \frac {D(t)} {Z(t)}-\frac{1}{ 4 t}\frac{\int_M d^2H|\nabla u|^2\ d\mu}{Z(t)}\nonumber
\end{eqnarray}
Using estimate (\ref{eeD}), we obtain from (\ref{Ze1}) that
\begin{eqnarray*}
\Big(\frac{D(t)}{Z(t)}\Big)'\ge-\frac 1 t \frac {D(t)} {Z(t)}-C_0 \frac {D(t)} {Z(t)}-\frac{3}{8 t^{1/2}}\frac {D(t)} {Z(t)}
\ge-\frac 1 t \frac {D(t)} {Z(t)}-\frac{ 1}{ 2 t^{\frac 1 2}}\frac {D(t)} {Z(t)}
\end{eqnarray*}
for $0<t\le T$, which implies
$$
\left(e^{t^{\frac 1 2}}\cdot t\cdot \frac{ D(t)}{Z(t)}\right)'\ge 0,
$$
proving the theorem.

\end{proof}
\begin{remark}
The conclusion in Theorem \ref{thm-sd} also holds for almost nonnegative manifolds with almost the same proof as Theorem \ref{thm-sd}'s, i.e.  sectional curvature nonnegative in Theorem \ref{thm-sd} can be replaced by $\operatorname{Sect}_g\ge -K$ with $K\operatorname{diam}^2(M)\le \epsilon_0$ for some small positive $\epsilon_0$.
\end{remark}

It is well-known that on $\mathbb{R}^n$ a unique continuation (or backward uniqueness) theorem  follows from the monotonicity of parabolic frequency, and then from Theorem \ref{thm-sd}, we conclude  the following backward uniqueness theorem for the heat equation on compact manifolds with nonnegative sectional curvature.
\begin{corollary}\label{Co}
Let $(M^m,g)$ be a compact Riemannian manifold with nonnegative sectional curvature.  Let $u(x,t)$ be a smooth solution to the heat equation (\ref{heatequation}) in $M\times(0, +\infty)$. If $u(x,t)$ vanishes of infinite order in $(x_0, t_0)$ in the sense that
\begin{equation}
|u(x,t)|\le O\big(d^2(x,x_0)+|t-t_0|\big)^N
\end{equation}
for any integer $N>0$, for any $(x,t)$ near $(x_0, t_0)$. Then $u(x,t)$ is identically zero.
\end{corollary}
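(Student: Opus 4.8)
The plan is to run the standard frequency-to-unique-continuation scheme going back to Almgren and Poon. Put $o=x_0$, so that $H(x,t)=H(x,x_0;t)$ is the heat kernel centered at $x_0$, and consider, under the assumption $u\not\equiv 0$, the weighted quantities
\[
Z(t)=\int_M H(x,t)\,u^2(x,t_0-t)\,d\mu,\qquad D(t)=\int_M H(x,t)\,|\nabla u|^2(x,t_0-t)\,d\mu .
\]
The idea is that infinite-order vanishing of $u$ at $(x_0,t_0)$ forces $Z(t)$ to decay faster than every power of $t$ as $t\to 0^+$, while the almost monotonicity of the parabolic frequency (Theorem \ref{thm-sd}) forces $Z(t)$ to decay at most polynomially; these are incompatible, so $u\equiv 0$.

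First I would fix the data so as to apply Theorem \ref{thm-sd}. If $u\not\equiv 0$, then forward uniqueness and injectivity of the heat semigroup on the closed manifold $M$ show $u(\cdot,s)$ is nonconstant for every $s>0$ (were it constant, $u$ would be a constant, hence $\equiv 0$ by the vanishing hypothesis), so $\int_M u^2(\cdot,s)\,d\mu>0$ and $\int_M|\nabla u|^2(\cdot,s)\,d\mu>0$ for all $s$. By smoothness of $u$ on the compact slab $M\times[t_0/2,t_0]$ choose $\Lambda_0$ bounding $|u|+|\nabla u|+|\nabla^2 u|$ there and $c_0:=\min_{s\in[t_0/2,t_0]}\int_M|\nabla u|^2(\cdot,s)\,d\mu>0$, and set $a_0:=\Lambda_0/\sqrt{c_0}$; then $u(\cdot,s)$ meets the initial hypothesis of Theorem \ref{thm-sd} with this $a_0$ for every $s\in[t_0/2,t_0]$. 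Let $T=T(M,a_0)>0$ be the constant furnished by Theorem \ref{thm-sd} applied to the time-translated solution $s\mapsto u(\cdot,t_0-T+s)$, shrunk so that $T\le t_0/2$ and $T\le\rho_0$, where $\rho_0>0$ is a radius on which the estimate $|u(x,t)|\le C_N\big(d^2(x,x_0)+|t-t_0|\big)^N$ is valid. For this translated solution the pair $(Z,D)$ above is exactly the pair of Section 3, so Proposition \ref{thm-fd} gives $Z'(t)=2D(t)$ and Theorem \ref{thm-sd} gives that $e^{t^{1/2}}\,t\,D(t)/Z(t)$ is nondecreasing on $[0,T]$.

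Now I would extract the two incompatible bounds. For the lower bound, $Z(T)>0$ since $H(\cdot,T)>0$ and $u(\cdot,t_0-T)\not\equiv 0$; monotonicity gives $t\,D(t)/Z(t)\le e^{T^{1/2}}T\,D(T)/Z(T)=:\Lambda$ on $(0,T]$, i.e. $(\log Z)'(t)=2D(t)/Z(t)\le 2\Lambda/t$, and integrating from $t$ to $T$ yields $Z(t)\ge Z(T)(t/T)^{2\Lambda}=c_1 t^{2\Lambda}$ with $c_1>0$. For the upper bound, use the Gaussian estimate \eqref{upper}, $H(x,t)\le C t^{-m/2}e^{-d^2(x,x_0)/(5t)}$, and split $M$ at $d(x,x_0)=\rho_0$: on $\{d\le\rho_0\}$ the hypothesis gives $u^2(x,t_0-t)\le C_N^2(d^2(x,x_0)+t)^{2N}$, and after the rescaling $r=\sqrt t\,\rho$, using the volume bound $\Vol(B(x_0,r))\le Cr^m$ (valid in particular under $\operatorname{Sect}\ge 0$), this piece is $\le C_N' t^{2N}$; on $\{d\ge\rho_0\}$, bounding $|u(\cdot,t_0-t)|\le\Lambda_0$, the exponential factor makes the piece $O\big(e^{-\rho_0^2/(10t)}\big)=o(t^{2N})$. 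Hence for every $N$ there is $\widetilde C_N$ with $Z(t)\le\widetilde C_N t^{2N}$ for small $t$. Taking $N>\Lambda$, the inequality $c_1 t^{2\Lambda}\le\widetilde C_N t^{2N}$ for all small $t$ forces $c_1=0$, hence $Z(T)=0$, hence $u(\cdot,t_0-T)\equiv 0$; forward uniqueness gives $u\equiv 0$ for $t\ge t_0-T$, and injectivity of $e^{s\Delta}$ on $M$ gives $u\equiv 0$ for $0<t<t_0-T$ as well. This contradicts $u\not\equiv 0$, so $u\equiv 0$.

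The main obstacle will be the super-polynomial upper bound on $Z(t)$: one must control the far region $\{d(x,x_0)\ge\rho_0\}$, where only the crude bound $|u|\le\Lambda_0$ is available — this is precisely where the Gaussian decay of $H$ is essential — and carry out the near-region rescaling cleanly, which is where nonnegative curvature enters through the Euclidean-type volume bound. A secondary point requiring care is the order of quantifiers in fixing the data: $a_0$ must be extracted from uniform bounds on the fixed slab $M\times[t_0/2,t_0]$ \emph{before} invoking Theorem \ref{thm-sd}, so that $T=T(M,a_0)$ does not circularly depend on $u(\cdot,t_0-T)$.
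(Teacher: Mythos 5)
Your proposal is correct and follows essentially the same route as the paper's proof: argue by contradiction, use the almost monotonicity of Theorem \ref{thm-sd} to bound the frequency and deduce a polynomial lower bound $Z(t)\ge c\,t^{2\Lambda}$, and use the infinite-order vanishing together with the Gaussian upper bound on the heat kernel to show $Z(t)$ decays faster than any power of $t$. The only differences are minor technical choices — you split the spatial integral at a fixed radius $\rho_0$ where the paper splits at $d=t^{1/4}$, and you are more explicit than the paper about extracting $a_0$ from the slab $M\times[t_0/2,t_0]$ before invoking Theorem \ref{thm-sd} so that $T=T(M,a_0)$ is not circularly defined.
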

\begin{proof}
We assume by contradiction that
$$c_1:=\int_M H(x,x_0;t_0)\cdot |\nabla u|^2(x, 0)\ dx>0.$$
Assume further that
$$|u(x,t)|_{C^2}\le c_2,$$
for $M\times[0,t_0]$.

Let
$$
Z(t)=\int_M H(x,x_0;t)\cdot u^2(x, t_0-t)\ d\mu,
\text{\quad and\quad}D(t)=\int_M H(x,,x_0;t)\cdot |\nabla u|^2(x, t_0-t)\ d\mu.
$$
Then by Theorem \ref{thm-sd}, there exist a constant $T$ (depending on $M$ and $c_2/c_1$), such that
$$
e^{t^{1/2}}\cdot t\cdot\frac{D(t)}{Z(t)}
$$
is monotone nondecreasing in $[T-t_0,t_0]$. Let $e^{ t_0^{1/2}}\cdot t_0\cdot\frac{D(t_0)}{Z(t_0)}=C(t_0)$, and then it follows
$$
\big(\log Z(t)\big)'\le \frac{2C(t_0)}{t},
$$
which implies
\begin{equation}\label{Zes1}
Z(t)\ge Z(t_0)\big(\frac t {t_0}\big)^{2C(t_0)}
\end{equation}
for $0<t<t_0$. Here $Z(t_0)$ and $D(t_0)$ are nonzero due to the assumption on $D(T)$ and Proposition \ref{pro}.

But on the other hand if
$u$ vanishes of infinite order in $(x_0, t_0)$, then for any integer $N>0$ there exist constant $C_1>0$ and $\theta>0$, such that
for any $(x,t)$ satisfying $d^{2}(x,x_0)+|t-t_0|\le \theta$, it holds
$$
|u(x,t)|\le C_1\big(d^{2N}(x,x_0)+|t-t_0|^N\big).
$$
For any $t$ satisfying $t^{1/2}+t\le \theta$, we estimate
\begin{eqnarray*}
Z(t)=\int_M H(x,x_0;t)\cdot u^2(x, t_0-t)\ d\mu\le C_M \int_M u^2(x, t_0-t)\cdot t^{-\frac m 2}e^{-\frac{d^2(x,x_0)}{5t}}\ d\mu.
\end{eqnarray*}
Since
\begin{eqnarray*}
\int_{d\le t^{1/4}} u^2(x, t_0-t) t^{-\frac m 2}e^{-\frac{d^2(x,x_0)}{5t}}\ d\mu&\le&C(M, C_1)\int_{d\le t^{1/4}} t^{\frac N 2} t^{-\frac m 2}e^{-\frac{d^2(x,x_0)}{5t}}\ d\mu\\
&\le& C(M, C_1) t^{N/2-m/2}
\end{eqnarray*}
and
\begin{eqnarray*}
\int_{d>t^{1/4}} u^2(x, t_0-t)\cdot t^{-\frac m 2}e^{-\frac{d^2(x,x_0)}{5t}}\ d\mu&\le& C(c_2)\int_{d> t^{1/4}} t^{-\frac m 2}e^{-\frac{d^2(x,x_0)}{10t}}e^{-\frac{1}{10t^{1/2}}}\ d\mu\\
&\le& C(c_2) e^{-\frac{1}{10}t^{-1/2}}\\
&\le& C(c_2) t^{N/2-m/2}
\end{eqnarray*}
for $t$ small.  These give
\begin{eqnarray*}
Z(t)\le C t^{N/2-m/2}.
\end{eqnarray*}
Since $N$ is arbitrary large, the above inequality contradict with the estimate (\ref{Zes1}) as $t$ goes to zero.
Then we have $D(T)=0$, which immediately implies that $u(x,t)$ is identically zero for $t\in [t_0-T, t_0]$,
hence for all $t$.

\end{proof}

\section{A monotonicity formula for Ricci flow on surfaces}

In this section, we introduce a quantity $J(t)$ (see \eqref{J(t)}),
which is closely related to the parabolic frequency functional, and prove its monotonicity under Ricci flow on surfaces.

\begin{theorem}\label{Monotonicity Theorem}
Let $M^2$ be a closed surface. Suppose that $g(t)$ is a solution to the Ricci flow $\frac{\partial}{\partial t} g =-R g$ on $M$ with positive scalar curvature for $t\in [0,T)$. Let $v(x, t)$ be a nonconstant solution to the backward heat equation
$$v_t(x, t)+\Delta_{g(t)} v(x, t)=0$$
on $M\times[0,T)$.
Define
\begin{eqnarray}\label{J(t)}
  J(t)= t\cdot \frac{\int_M |\nabla v(x,t)|^2 \cdot R(x,t)\;d\mu_{g(t)}}{\int_M v^2(x,t)\cdot R(x,t)\;d\mu_{g(t)}}.
\end{eqnarray}
Then $J(t)$ is monotone increasing in $t$ on $[0,T)$.
\end{theorem}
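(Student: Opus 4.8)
The plan is to mimic the logarithmic-derivative argument used for the parabolic frequency on a fixed manifold, but now carefully tracking the extra terms produced by the evolution of the metric, the volume form, and the scalar curvature under Ricci flow. Write
\begin{eqnarray*}
Z(t)=\int_M v^2 R\,d\mu_{g(t)},\qquad D(t)=\int_M |\nabla v|^2 R\,d\mu_{g(t)},
\end{eqnarray*}
so that $J(t)=tD(t)/Z(t)$. On a surface $\operatorname{Ric}=\tfrac12 R g$, the Ricci flow reads $\partial_t g=-Rg$, hence $\partial_t d\mu = -R\,d\mu$, and $R$ itself evolves by the reaction-diffusion equation $\partial_t R = \Delta R + R^2$. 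The first step is to compute $Z'(t)$: differentiating under the integral and using $v_t=-\Delta v$, $\partial_t R=\Delta R+R^2$, $\partial_t d\mu=-R\,d\mu$, the $R^2$ and $-R$ contributions should cancel and one integrates by parts to obtain a clean identity of the form $Z'(t)=2D(t)$ (up to checking the $\int v^2(\Delta R)$ versus $\int R\,|\nabla v|^2$-type cancellations via Bochner on the surface). This is the analogue of $Z'=2D$ in Proposition \ref{thm-fd}.

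The second and main step is the differential inequality for $D(t)$. Differentiating $D(t)$ produces the terms $\int H$-free analogues: $2\int \langle\nabla v,\nabla v_t\rangle R$, $\int|\nabla v|^2(\Delta R+R^2)$, and $-\int|\nabla v|^2 R^2$ from the volume form, plus commutator terms from $\partial_t|\nabla v|^2$ which on an evolving metric contributes $+R|\nabla v|^2$ (from $\partial_t g^{ij}=Rg^{ij}$). Substituting $v_t=-\Delta v$ and integrating by parts, one wants to land on an expression dominated by $2\int (\Delta v)^2 R$ plus curvature terms of controllable sign. The key algebraic identity to exploit is the pointwise Bochner formula $\Delta|\nabla v|^2 = 2|\nabla^2 v|^2 + 2\langle\nabla v,\nabla\Delta v\rangle + R|\nabla v|^2$ on a surface; combining this with the Reilly-type / completing-the-square manipulation from the proof of Theorem \ref{thm-sd} (there with the heat kernel as weight, here with $R$ as weight) should yield
\begin{eqnarray*}
D'(t)Z(t)-D(t)Z'(t)\ \ge\ -\frac{D(t)Z(t)}{t},
\end{eqnarray*}
after throwing away a manifestly nonnegative square term and using the Cauchy–Schwarz inequality $D(t)^2 \le \big(\int(\Delta v + \text{drift})v R\big)^2 \le Z(t)\cdot(\cdots)$ exactly as in the $e^{t^{1/2}}$-monotonicity proof, but now \emph{without} any error term because all the curvature contributions have a favorable sign when $R>0$ in dimension two. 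That favorable-sign phenomenon is precisely why one gets genuine (not merely almost) monotonicity here.

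Granting the inequality $D'Z - DZ' \ge -DZ/t$, one divides by $Z^2$ to get $(D/Z)' \ge -(D/Z)/t$, i.e. $(tD/Z)'\ge 0$, which is the desired monotonicity of $J(t)$; positivity of the denominator $Z(t)$ on $[0,T)$ follows since $R>0$ and $v$ is nonconstant (and if $v$ vanished on an open set one propagates via the backward heat equation, or simply notes $Z'=2D\ge 0$ keeps $Z$ bounded away from $0$ once positive). The main obstacle is Step 2: organizing the several metric-evolution error terms ($-R|\nabla v|^2$ from $d\mu$, $+R|\nabla v|^2$ from $\partial_t g^{ij}$, $R|\nabla v|^2$ from Bochner, $R^2|\nabla v|^2$ from $\partial_t R$, and $-R^2|\nabla v|^2$ again from $d\mu$) so that what remains is a sum of a perfect square and nonnegative curvature terms — this is where the two-dimensional identity $\operatorname{Ric}=\tfrac12 Rg$ and $R>0$ are used essentially, and it is the computational heart of the argument. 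Everything else is a verbatim transcription of the Cauchy–Schwarz / logarithmic-derivative scheme already executed in Section 3.
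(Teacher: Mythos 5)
Your Step 1 is fine: since $\partial_t R=\Delta R+R^2$ and $\partial_t\,d\mu=-R\,d\mu$, the weighted measure satisfies $\partial_t(R\,d\mu)=\Delta R\,d\mu$, and $Z'=2D$ follows as you describe. The gap is in Step 2, and it is a missing idea rather than a bookkeeping issue. After differentiating $D(t)$ and integrating $\int_M|\nabla v|^2\,\Delta R\,d\mu$ by parts, the term you must control is
\[
2\int_M \nabla_i\nabla_j R\,\nabla_i v\,\nabla_j v\ d\mu ,
\]
and this has no favorable sign coming from $R>0$ or from the two-dimensional identity $\operatorname{Ric}=\tfrac12 Rg$: the Hessian of the scalar curvature is simply not signed. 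Moreover, nothing in your scheme produces the $1/t$ that must appear in $D'Z-DZ'\ge -DZ/t$; the Bochner formula for $\Delta|\nabla v|^2$ is $t$-independent, and the ``completing the square with the heat-kernel weight'' manipulation from Section 3 does not transcribe verbatim because the weight here is $R$, which is not a positive solution of the heat equation for the evolving metric.

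The missing ingredient is Hamilton's matrix differential Harnack estimate for the Ricci flow on surfaces with $R>0$,
\[
\nabla_i\nabla_j\log R+\frac12\left(R+\frac1t\right)g_{ij}\ \ge\ 0,
\quad\text{i.e.}\quad
\nabla_i\nabla_j R\ \ge\ R\,\nabla_i\log R\,\nabla_j\log R-\frac12 R\left(R+\frac1t\right)g_{ij},
\]
which plays for the weight $R$ exactly the role the heat-kernel Harnack played in Section 3. Substituting it into the Hessian term cancels the $\int_M|\nabla v|^2R^2\,d\mu$ contribution coming from $\partial_t g^{ij}=Rg^{ij}$, produces the $-D(t)/t$ term, and leaves the perfect square $2\int_M R\,|\langle\nabla v,\nabla\log R\rangle-v_t|^2\,d\mu$ after combining with $2\int_M v_t^2R\,d\mu$ and $-4\int_M\langle\nabla R,\nabla v\rangle v_t\,d\mu$; Cauchy--Schwarz against the identity $D=\int_M vR\left(v_t-\langle\nabla v,\nabla\log R\rangle\right)d\mu$ then gives $D'Z-DZ'\ge -DZ/t$ and hence $(tD/Z)'\ge 0$. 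Without this Li--Yau--Hamilton estimate your claimed differential inequality cannot be derived, so the proposal as written does not close.
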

The quantity $J(t)$
can be viewed as an entropy on two-dimensional Ricci flow. The crucial ingredient of the proof is a matrix differential Harnack estimate for Ricci flow on surfaces, which was obtained by R. Hamilton in the 1980's and was included in \cite[Exercise 10.22]{CLN}. It can be proved by applying Hamilton's maximum principle for tensors to the evolution equation satisfied by the quantity on the left hand side of \eqref{Harnack}.
\begin{lemma}
Let $(M^2, g(t))$ be a solution to the Ricci flow with positive scalar curvature for $t\in [0,T)$. Then for any $t\in (0,T)$, we have
\begin{equation}\label{Harnack}
    \nabla_i \nabla_j \log R + \frac{1}{2}\left(R+\frac{1}{t}\right) g_{ij} \geq 0.
\end{equation}
\end{lemma}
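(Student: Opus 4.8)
The plan is to follow the route indicated right after the statement: introduce the symmetric $2$-tensor
\[
L_{ij}:=\nabla_i\nabla_j\log R+\frac12\Big(R+\frac1t\Big)g_{ij},
\]
derive a reaction--diffusion equation for $L_{ij}$ along the flow whose zeroth-order term obeys the null-eigenvector condition, and conclude $L_{ij}\ge0$ from Hamilton's maximum principle for symmetric $2$-tensors in the form valid for a time-dependent metric. Before that I would record the elementary facts. In dimension two $\operatorname{Ric}=\tfrac R2 g$, so $\frac{\partial}{\partial t}g=-Rg$ is the usual Ricci flow and $R$ evolves by $\partial_tR=\Delta R+R^2$; the scalar maximum principle then gives $R\ge\min_M R(\cdot,0)>0$ on $[0,T)$, so $\log R$ is well defined and smooth. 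On any slab $M\times[0,T_1]$ with $T_1<T$, smoothness together with the lower bound on $R$ yields a constant $C_1=C_1(T_1)$ with $\nabla^2\log R+\tfrac R2 g\ge-C_1 g$, hence $L_{ij}\ge(\tfrac1{2t}-C_1)g_{ij}\ge0$ for $0<t\le\tfrac1{2C_1}$. Thus $L$ is already nonnegative for all sufficiently small $t$, which is what will seed the maximum principle (one cannot start at $t=0$, where the $\tfrac1t$ term blows up).

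The core of the argument is the evolution equation for $L_{ij}$. Writing $a=\log R$, one assembles: the variation formula $\partial_t(\nabla_i\nabla_j a)=\nabla_i\nabla_j(\partial_t a)+\tfrac12(\nabla_iR\,\nabla_j a+\nabla_jR\,\nabla_i a-\langle\nabla R,\nabla a\rangle g_{ij})$ coming from $\partial_t\Gamma$ under $\partial_tg=-Rg$; the identity $\partial_t a=\Delta a+|\nabla a|^2+R$; the Bochner-type commutation formulas for $\nabla_i\nabla_j\Delta a$ and $\nabla_i\nabla_j|\nabla a|^2$, simplified by the two-dimensional identities $R_{ikjl}=\tfrac R2(g_{ij}g_{kl}-g_{il}g_{kj})$ and $\operatorname{Ric}=\tfrac R2 g$; and $\partial_t R=\Delta R+R^2$, $\partial_tg_{ij}=-Rg_{ij}$. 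Collecting terms and using $\operatorname{tr}_gL=\Delta a+R+\tfrac1t$ to rewrite the purely scalar part of the reaction (the combination $R\Delta a+R^2+\tfrac Rt$ equals $R\,\operatorname{tr}_gL$), one is led to
\[
\partial_tL_{ij}=\Delta L_{ij}+2\langle\nabla\log R,\nabla L_{ij}\rangle+2L_i{}^kL_{kj}-\Big(3R+\frac2t\Big)L_{ij}+R\,(\operatorname{tr}_gL)\,g_{ij}.
\]
As a consistency check, tracing this gives $\partial_t(\operatorname{tr}_gL)=\Delta(\operatorname{tr}_gL)+2\langle\nabla\log R,\nabla(\operatorname{tr}_gL)\rangle+2|L|^2-\tfrac2t\operatorname{tr}_gL$, which recovers Hamilton's trace Harnack $\operatorname{tr}_gL\ge0$ on using $2|L|^2\ge(\operatorname{tr}_gL)^2$ in dimension two.

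To finish, fix $T_1<T$ and choose $t_0\in(0,T_1]$ small enough that $L(\cdot,t_0)\ge0$ (possible by the first paragraph). On $[t_0,T_1]$ the metric and the coefficients $3R+\tfrac2t$ and $R$ are bounded and smooth, the first-order term is a genuine transport term, and the reaction $\mathcal N_{ij}=2L_i{}^kL_{kj}-(3R+\tfrac2t)L_{ij}+R(\operatorname{tr}_gL)g_{ij}$ satisfies the null-eigenvector condition: if $L\ge0$ and $L_{ij}V^j=0$, then $L_i{}^kL_{kj}V^iV^j=0$ and $L_{ij}V^iV^j=0$, so $\mathcal N_{ij}V^iV^j=R\,(\operatorname{tr}_gL)\,|V|^2\ge0$, since $R>0$ and $\operatorname{tr}_gL\ge0$. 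Hamilton's tensor maximum principle for an evolving metric then gives $L\ge0$ on $[t_0,T_1]$, and combined with the direct estimate for $0<t\le t_0$ this gives $L\ge0$ on $(0,T_1]$; letting $T_1\uparrow T$ yields \eqref{Harnack}. The main obstacle is the evolution-equation computation itself: it requires commuting third-order covariant derivatives and is sign-sensitive, and it is exactly the two-dimensional identity $\operatorname{Ric}=\tfrac R2 g$ that collapses the reaction to the displayed form and, decisively, gives the extra term $+R(\operatorname{tr}_gL)g_{ij}$ the sign needed for the null-eigenvector condition; the opposite sign would defeat the maximum principle.
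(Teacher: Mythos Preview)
Your proposal is correct and carries out precisely the strategy the paper indicates (the paper does not give a proof, but cites \cite[Exercise 10.22]{CLN} and states that it follows from Hamilton's tensor maximum principle applied to the evolution equation of $L_{ij}$). Your treatment in fact supplies the details the paper omits: the seeding argument that $L_{ij}\ge0$ for small $t$ via the $\tfrac{1}{2t}$ term, the derivation of the reaction--diffusion equation for $L_{ij}$ (with the trace check against the scalar Harnack), and the verification of the null-eigenvector condition using $R>0$ and $\operatorname{tr}_gL\ge0$ (the latter following from $L\ge0$ at the point under consideration, so there is no circularity).
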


Before giving the proof of Theorem \ref{Monotonicity Theorem}, we recall some evolution equations for the Ricci flow on surfaces that can be found in \cite{CLN}.
\begin{lemma}\label{measure evolve}
Let $(M^2, g(t))$ be a solution to the Ricci flow $\frac{\partial}{\partial t} g =-R g$. Then we have
\begin{eqnarray*}
    \frac{\partial}{\partial t} g^{ij} &=&  R g^{ij}, \\
    \frac{\partial}{\partial t} (R\;d\mu) &=& \Delta R \;d\mu.
\end{eqnarray*}
\end{lemma}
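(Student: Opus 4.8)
The plan is to establish both identities by direct computation from the defining equation $\frac{\p}{\p t} g_{ij} = -R g_{ij}$, together with the standard variation formulas for the inverse metric and the Riemannian volume form and the well-known evolution of scalar curvature under the two-dimensional Ricci flow.

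First I would prove $\frac{\p}{\p t} g^{ij} = R g^{ij}$ by differentiating the identity $g^{ik} g_{kj} = \delta^i_j$ in $t$: this gives $(\p_t g^{ik}) g_{kj} = -g^{ik}(\p_t g_{kj}) = R\, g^{ik} g_{kj} = R\,\delta^i_j$, and raising the index $j$ with $g^{jl}$ yields the claim. Next, for the volume form, from $\p_t \log \det g = g^{ij}\p_t g_{ij} = -R\, g^{ij} g_{ij} = -2R$ in dimension two one obtains $\p_t\, d\mu = -R\, d\mu$. The remaining ingredient is the evolution equation $\p_t R = \Delta R + R^2$ for the scalar curvature under the flow $\p_t g = -R g$, which is recorded in \cite{CLN}; it follows from the general formula $\p_t R = -\Delta(\tr_g h) + \operatorname{div}_g\operatorname{div}_g h - \langle h, \Ric\rangle$ for a metric variation $\p_t g = h$, after substituting $h_{ij} = -R g_{ij}$ and using the surface identity $\Ric = \tfrac{R}{2}g$ (which turns the general $\p_t g = -2\Ric$ into $\p_t g = -R g$ and gives $|\Ric|^2 = \tfrac{R^2}{2}$). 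Combining the two pieces, $\p_t(R\, d\mu) = (\p_t R)\, d\mu + R\,(\p_t\, d\mu) = (\Delta R + R^2)\, d\mu - R^2\, d\mu = \Delta R\, d\mu$, which is the second identity.

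There is no serious obstacle here: the only step requiring a (short, standard) computation is the scalar curvature evolution $\p_t R = \Delta R + R^2$, which one may either rederive as indicated from the variation-of-curvature formula or simply quote from \cite{CLN}; everything else is routine bookkeeping with the metric and volume form. The cancellation of the $R^2$ terms in the last display is the only point worth flagging, and it is precisely what makes $R\, d\mu$ evolve by the (forward) heat operator.
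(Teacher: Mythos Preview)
Your proposal is correct. The paper does not give its own proof of this lemma but simply cites \cite{CLN}; your direct computation---differentiating $g^{ik}g_{kj}=\delta^i_j$, using $\p_t d\mu=\tfrac12(g^{ij}\p_t g_{ij})\,d\mu=-R\,d\mu$, and combining the standard surface evolution $\p_t R=\Delta R+R^2$ with the volume-form evolution to cancel the $R^2$ terms---is exactly the argument one finds in that reference.
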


\begin{proof}[Proof of Theorem \ref{Monotonicity Theorem}]
In the following, all the integrals are preformed with respect to $du_{g(t)}$, the Riemannian measure induced by the metric $g(t)$. To keep notations simple, we omit writing it.
Let
\begin{eqnarray*}
Z_1(t)&=& \int_M v^2(x,t) R(x,t)\ d\mu >0,  \\ D_1(t)&=& \int_M |\nabla v(x,t)|^2  R(x,t)\ d\mu.
\end{eqnarray*}
Direct calculation using Lemma \ref{measure evolve} shows
\begin{eqnarray*}
    Z_1'(t)&=& \int_M 2vv_t R\ d\mu + \int_M v^2 \Delta R \ d\mu \\
    &=& \int_M -2v \Delta v R \ d\mu+ \int_M (2v \Delta v + 2 |\nabla v|^2 ) R\ d\mu\\
    &=& 2 D_1(t).
\end{eqnarray*}
Making use of
$$\frac{\partial}{\partial t} |\nabla v|^2 =\left(\frac{\partial}{\partial t} g^{ij}\right) \nabla_i v \nabla_j v + 2g^{ij}\frac{\partial}{\partial t}(\nabla_i v ) \nabla_j v =|\nabla v|^2 R +2 \langle \nabla v, \nabla v_t \rangle, $$
and
$$\int_M |\nabla v|^2 \Delta R\ d\mu =-2\int_M \nabla_j R \nabla_j \nabla_i v \nabla_i v\ d\mu =2\int_M \nabla_i \nabla_j R \nabla_i v \nabla_j v \ d\mu-2 \int_M \langle \nabla R, \nabla v \rangle v_t \ d\mu,$$
yields
\begin{eqnarray*}
    D_1'(t)&=& \int_M |\nabla v|^2 R^2 \ d\mu+ 2\int_M \langle \nabla v, \nabla v_t \rangle R\ d\mu\\
    &&+ 2\int_M  \nabla_i \nabla_j R \nabla_i v \nabla_j v\ d\mu-2\int_M \langle \nabla R, \nabla v \rangle v_t \ d\mu\\
    &=& \int_M |\nabla v|^2 R^2 \ d\mu+ 2\int_M  v_t^2  R\ d\mu\\
    &&+ 2\int_M  \nabla_i \nabla_j R \nabla_i v \nabla_j v\ d\mu-4\int_M \langle \nabla R, \nabla v \rangle v_t\ d\mu.
\end{eqnarray*}
The matrix differential Harnack estimate in Lemma \ref{Harnack} is equivalent to
$$\nabla_i \nabla_j R -R\nabla_i \log R \nabla_j \log R + \frac{1}{2}R\left(R+\frac{1}{t}\right) g_{ij} \geq 0.$$
Substituting this into the expression for $D_1'(t)$ yields
\begin{eqnarray*}
    D_1'(t)&\geq & 2 \int_M R\langle \nabla v, \nabla \log R\rangle^2\ d\mu -\frac{1}{t} \int_M |\nabla v|^2 R\ d\mu\\
    &&+ 2\int_M  v_t^2  R \ d\mu-4\int_M R\langle \nabla \log R, \nabla v \rangle v_t \ d\mu\\
    &=& 2\int_M R \left| \langle \nabla v, \nabla \log R \rangle -v_t \right|^2 \ d\mu-\frac{D_1(t)}{t}.
\end{eqnarray*}
Therefore,
\begin{eqnarray*}
   D'_1(t)Z_1(t)+\frac{Z_1(t) D_1(t)}{t}
   &\geq & 2\int_M R \left| \langle \nabla v, \nabla \log R \rangle v_t \right|^2\ d\mu \cdot \int_M v^2 R \ d\mu\\
     &\geq&  2\Big(\int_M v R \left(v_t - \langle \nabla v, \nabla \log R \rangle\right)\ d\mu\Big)^2\\
     &\ge& 2D_1^2(t) = Z'_1(t) D_1(t),
\end{eqnarray*}
where we have used the Cauchy-Schwarz inequality, $Z'_1(t)=2D(t)$, and
\begin{eqnarray*}
D_1(t)& =& \int_M |\nabla v |^2 R \ d\mu= -\int_M \langle \nabla v, \nabla R \rangle v\ d\mu -\int_M v \Delta v R \ d\mu\\
&=& \int_M   v R \left(v_t - \langle \nabla v, \nabla \log R \rangle\right)\ d\mu. \end{eqnarray*}
Theorem \ref{Monotonicity Theorem} then follows immediately since
\begin{eqnarray*}
  J'(t) =\frac{t }{Z_1(t)^2}
  \left(\frac{Z_1(t)D_1(t)}{t}+D_1'(t) Z_1(t) -Z'_1(t) D_1(t)\right) \geq 0.
\end{eqnarray*}
This finishes the proof.
\end{proof}


The quantity $J(t)$ can also be viewed as a  Dirichlet energy with respect to the weighted evolving measure $R(x,t) d\mu$.
For any $0<t<T$, we define the first nonzero eigenvalue of $(M^2, g(t))$ with the weighted measure $R(x,t)d\mu$  by
\begin{eqnarray*}
\lambda_{R}(t)=\inf\left\{\frac{\int_M |\nabla u|^2  R(x,t)\;d\mu_{g(t)}}{\int_M u^2  R(x,t)\;d\mu_{g(t)}}: u(x)\in C^{\infty}(M)\setminus \{0\}, \int_M u(x) R(x,t) \ d\mu_{g(t)}=0\ \right\}.
\end{eqnarray*}
Then it is easy to see that the following corollary  is a direct consequence of Theorem \ref{Monotonicity Theorem}.
\begin{corollary}
Let $M^2$ be a closed surface. Suppose that $g(t)$ is a solution to the Ricci flow $\frac{\partial}{\partial t} g =-R g$ on $M$ with positive scalar curvature for $t\in[0,T)$, and $\lambda_{R}(t)$ is the eigenvalue defined as above. Then $t\lambda_{R}(t)$ is a monotone increasing  function of $t$ in $[0,T)$.
\end{corollary}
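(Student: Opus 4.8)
The plan is to derive the corollary from Theorem~\ref{Monotonicity Theorem} by feeding into the functional $J(t)$ the solution of the backward heat equation whose value at a chosen ``anchor'' time $t_1$ is an almost-minimizer for $\lambda_R(t_1)$. All integrals below are taken with respect to $d\mu_{g(t)}$. Fix $t_1\in(0,T)$ and $\eta>0$. By the definition of $\lambda_R(t_1)$ there is $\phi\in C^\infty(M)\setminus\{0\}$ with $\int_M\phi\,R(\cdot,t_1)\,d\mu_{g(t_1)}=0$ whose Rayleigh quotient at time $t_1$ is less than $\lambda_R(t_1)+\eta$; such a $\phi$ is automatically nonconstant, since a constant function with vanishing $R(\cdot,t_1)$-average must vanish (as $R(\cdot,t_1)>0$). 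Next I would solve the backward heat equation $v_t+\Delta_{g(t)}v=0$ on $M\times[0,t_1]$ with terminal value $v(\cdot,t_1)=\phi$: in the reversed time variable $s=t_1-t$ this becomes a forward linear parabolic equation with smooth, uniformly elliptic coefficients on the compact interval $[0,t_1]\subset[0,T)$, hence has a unique smooth solution. Because $v(\cdot,t_1)=\phi$ is nonconstant, $v$ is not a constant function on $M\times[0,t_1]$, and by backward uniqueness for the heat equation $v(\cdot,t)$ is neither identically zero nor constant for any $t\in[0,t_1]$. Theorem~\ref{Monotonicity Theorem} (more precisely, its proof, which gives $J'(t)\ge 0$ wherever $v$ is defined) then shows that $J$ is monotone increasing on $[0,t_1]$ for this $v$.

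The one point that genuinely needs checking is that the linear constraint defining $\lambda_R$ is propagated by the flow. Setting $\Phi(t)=\int_M v(\cdot,t)\,R(\cdot,t)\,d\mu_{g(t)}$, differentiating, using $\frac{\partial}{\partial t}(R\,d\mu)=\Delta R\,d\mu$ from Lemma~\ref{measure evolve} together with $v_t=-\Delta v$, and integrating by parts, one gets $\Phi'(t)=\int_M\big(v_t\,R+v\,\Delta R\big)\,d\mu=\int_M\big(-\Delta v\,R+v\,\Delta R\big)\,d\mu=0$. Hence $\Phi(t)\equiv\Phi(t_1)=0$ on $[0,t_1]$, so for every $t\in(0,t_1]$ the function $v(\cdot,t)$ is an admissible competitor in the variational problem defining $\lambda_R(t)$. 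Writing $Z_1(t)=\int_M v^2 R\,d\mu$ and $D_1(t)=\int_M|\nabla v|^2 R\,d\mu$ as in the proof of Theorem~\ref{Monotonicity Theorem}, so that $J(t)=t\,D_1(t)/Z_1(t)$, admissibility yields $D_1(t)/Z_1(t)\ge\lambda_R(t)$ for every $t\in(0,t_1]$, while at $t=t_1$ the choice of $\phi$ gives $D_1(t_1)/Z_1(t_1)<\lambda_R(t_1)+\eta$.

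Putting these together, for $0<t_0<t_1<T$ the monotonicity of $J$ gives
\[
t_0\,\lambda_R(t_0)\ \le\ t_0\,\frac{D_1(t_0)}{Z_1(t_0)}\ =\ J(t_0)\ \le\ J(t_1)\ =\ t_1\,\frac{D_1(t_1)}{Z_1(t_1)}\ <\ t_1\big(\lambda_R(t_1)+\eta\big);
\]
letting $\eta\to0$ yields $t_0\,\lambda_R(t_0)\le t_1\,\lambda_R(t_1)$, and the case $t_0=0$ is trivial. Since $t_1\in(0,T)$ was arbitrary, $t\mapsto t\,\lambda_R(t)$ is monotone increasing on $[0,T)$. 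I do not expect a real obstacle: the only substantive ingredient is the constraint-propagation identity $\Phi'\equiv0$, which is a two-line computation once Lemma~\ref{measure evolve} is in hand; the auxiliary parameter $\eta$ merely lets us avoid discussing attainment of the infimum defining $\lambda_R(t_1)$, and it could be dispensed with by taking $\phi$ to be a genuine first eigenfunction (which exists and is smooth by standard elliptic theory).
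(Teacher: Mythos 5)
Your proof is correct and is exactly the standard deduction the paper leaves implicit when it calls the corollary ``a direct consequence'' of Theorem \ref{Monotonicity Theorem}: propagate a near-minimizer backward in time, verify via $\frac{\partial}{\partial t}(R\,d\mu)=\Delta R\,d\mu$ that the constraint $\int_M vR\,d\mu=0$ is preserved, and compare Rayleigh quotients through the monotone quantity $J(t)$. You correctly identify and handle the two points that actually need checking (constraint propagation and nonvanishing of $v(\cdot,t)$ via backward uniqueness), so nothing is missing.
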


\section*{Acknowledgments}
The second author was supported by NSF of China under Grant No. 11601359, NSF of Jiangsu Province No. BK20160301, and China
Postdoctoral Foundation grant No. 2017T100394 and 2016M591900. 






\bibliographystyle{alpha}
\bibliography{pf}

\end{document}